\newtheorem{remark}[theorem]{Remark}
\definecolor{Yingxiang}{rgb}{1, 0, 0}
\definecolor{other}{rgb}{0, 0, 1}
\title{Heterogeneous  optimized Schwarz Methods for heat conduction in composites with thermal contact resistance}
\author{Huan Zhang\thanks{School of Mathematics
    and Statistics, Northeast Normal University, Changchun 130024,
    China.({\tt zhangh514@nenu.edu.cn}).} 
    \and Hui Zhang\thanks{School of Mathematics and Physics, Xi'an Jiaotong-Liverpool University, Suzhou 215123,
    China. ({\tt Hui.Zhang@xjtlu.edu.cn}). }
    \and Yan Wang\thanks{School of Mathematics
    and Statistics, Northeast Normal University, Changchun 130024,
    China.({\tt wangy906@nenu.edu.cn}).}
    \and YingXiang Xu\thanks{Corresponding author. School of Mathematics
    and Statistics, Northeast Normal University, Changchun 130024,
    China. ({\tt yxxu@nenu.edu.cn}). }
    }
\begin{document}
\maketitle
\begin{abstract}
Heat transfer in composites is critical in engineering, where imperfect layer contact causes thermal contact resistance (TCR), leading to interfacial temperature discontinuity. We propose solving this numerically using the optimized Schwarz method (OSM), which decouples the heterogeneous problem into homogeneous subproblems. This avoids ill-conditioned systems from monolithic solving due to high contrast and interface jumps. Both energy estimate and Fourier analysis are used to prove the convergence of this algorithm when the standard Robin condition is applied to transmit information between subdomains.
To achieve fast convergence, instead of the standard Robin, the scaled Robin transmission condition is proposed, and the involved free parameter is rigorously optimized.
The results reveal several new findings due to the presence of TCR: first, the larger the TCR, the faster the OSM converges; second, mesh-independent convergence is achieved in the asymptotic sense, in contrast to the mesh-dependent results without TCR; and last, the heterogeneity contrast benefits the convergence, with a larger contrast leading to faster convergence. Interestingly, different from the case without TCR, the thermal conductivity also benefits the convergence, similar to the effect of heterogeneity. Numerical experiments confirm the theoretical findings and demonstrate the method's potential for nonlinear problems on irregular domains.
\end{abstract}

\begin{keywords}
{Optimized Schwarz method, Heat conduction in composites, Thermal contact resistance, Domain decomposition, Optimized transmission condition}
\end{keywords}

\begin{AMS}
65M55
\end{AMS}

\section{Introduction}
Composites are materials typically composed of various continuous solid components and are widely utilized in engineering applications such as aerospace \cite{mangalgiri1999composite} and electronics \cite{zweben1998advances} due to their superior physical properties and relatively low cost. 
The heat conduction processes within composites significantly influence their thermal conductivity, thermal stability, and mechanical properties. Consequently, thermal analysis of composites is crucial for simulating the manufacturing processes, thermal stresses, and welding of these materials, which have been extensively studied through detailed modeling to enhance both performance and cost-effectiveness. 

Steady-state heat conduction in composites is typically modeled using elliptic interface problems, a concept with a long historical lineage tracing back to \cite{peskin1977numerical,peskin1993improved}. 
Early research assumed that heat transfer between distinct solid materials was continuous; specifically, the temperature and heat flux at the interface were considered continuous \cite{frankel1987general}, a condition now recognized as perfect thermal contact. Notably, a substantial body of existing studies on heat transfer in composites relies on the assumption of perfect thermal contact. 
However, experimental evidence has demonstrated that temperature discontinuities frequently occur across interfaces \cite{hashin2001thin}. 
This phenomenon can be attributed to thermal contact resistance (TCR), also referred to as ``Kapitza'' resistance \cite{kapitza1941heat}, arising from imperfect contact between materials. Such imperfect contact is inevitable; the formation of interstitial media at rough contact interfaces limits the effective contact area, thereby introducing additional heat transfer resistance \cite{hasselman1987effective} compared to an ideal (perfect contact) scenario. This issue of imperfect contact is pervasive, occurring even in interface problems involving coupled solid and liquid phases. Therefore, it is imperative to consider TCR when analyzing heat transfer in composites. 

A considerable body of research has been conducted on solving elliptic interface problems, employing both analytical and numerical methods. 
Analytical methods primarily focus on the scenario of perfect contact, wherein both temperature and heat flux remain continuous. Notable examples include the separation of variables \cite{jain2010exact}, finite integral transforms \cite{singh2011finite}, Laplace transforms \cite{amiri2015exact}, Fourier transforms \cite{delouei2012exact}, and Green's functions \cite{cole2010heat}. However, research on analytical techniques addressing TCR is relatively limited \cite{haji2002temperature,haji2003steady}. 
The application of these analytical methods can be challenging due to their reliance on strict assumptions regarding the geometry of the domains and/or the boundary conditions. 
In contrast, numerical methods offer great adaptability in managing interface problems characterized by complex configurations, which have garnered significant attention  \cite{peskin1977numerical,leveque1994immersed,li2003new,gong2008immersed,oevermann2006cartesian,massjung2012unfitted}. When addressing problems with interfacial jump conditions, careful consideration is required to handle elements that intersect the interface.
Recently, several variants of the finite element method (FEM) have been developed to address imperfect contact problems, including the extended FEM \cite{yvonnet2011general,bakalakos2021extended} and unfitted FEM \cite{ji2017unfitted,wang2017weak}. Other numerical approaches, such as the finite volume method \cite{cao2018monotone} and the discontinuous Galerkin method \cite{cangiani2018adaptive}, have also been explored for tackling interface problems involving TCR. 
The aforementioned methods typically solve heat transfer problems in composite materials in a monolithic manner, which often lead to the formation of large, ill-conditioned algebraic systems due to heterogeneity contrasts. This issue complicates accurate problem resolution and frequently results in non-physical oscillations near the interface.
In contrast, domain decomposition methods \cite{smith1997domain,toselli2004domain} partition the computational domain into several homogeneous subdomains and reconstruct the solution through an iterative process of subdomain solving. This strategy mitigates the challenges associated with ill-conditioned large-scale algebraic systems and minimizes the occurrence of non-physical solutions. Notably, the presence of heterogeneity contrasts, which are detrimental to monolithic methods, can enhance the convergence rate of subdomain iterations \cite{gander2015optimized,gander2019heterogeneous}: the greater the heterogeneity contrast, the faster the subdomain iterations converge.
Furthermore, since the subproblems do not encounter the same heterogeneity challenges, existing solvers and codes can be directly employed to further augment the efficiency of the domain decomposition approach. These advantages render domain decomposition methods particularly well-suited for solving the heat transfer problems in composites, especially those involving TCR, as will be demonstrated in Section 3.

The first domain decomposition method was invented by Schwarz \cite{schwarz1870ueber} and is now commonly known as the Schwarz alternating method. This approach employs Dirichlet boundary conditions to facilitate information exchange between subdomains, necessitating an overlap between these subdomains to ensure convergence.
Later on, Lions \cite{lions1990schwarz} introduced a non-overlapping domain decomposition framework, wherein information transmission occurs via Robin boundary conditions. This advancement rendered non-overlapping domain decomposition methods viable for practical applications.
The Robin parameter employed in Lions' approach can be further optimized, resulting in the Optimized Schwarz Method (OSM), which is highly efficient since an optimization problem is solved to determine the best possible transmission parameters.
The OSM has been extensively utilized in various scientific and engineering contexts, including applications to the Navier-Stokes equation \cite{blayo2016towards}, Helmholtz equation \cite{gander2002optimized,gander2007optimized}, and Maxwell equation \cite{dolean2009optimized}. Notably, the non-overlapping OSM has been effectively applied to model problems characterized by discontinuous coefficients. In 2015, Dubois and Gander \cite{gander2015optimized} implemented the non-overlapping OSM for a steady-state thermal diffusion model featuring discontinuous coefficients, building on the foundational work of Maday and Magoulès \cite{maday2007optimized,maday2005non,maday2006improved}.
In 2019, Gander and Vanzan \cite{gander2019heterogeneous} applied the OSM to tackle heterogeneous second-order elliptic partial differential equations. 
The significant advantage that the large heterogeneity contrast improves the convergence rate of the OSM was observed, thus transforming a typical disadvantage of monolithic solving into a benefit of domain decomposition methods.

Though, existing domain decomposition algorithms have been successfully applied to solve problems with discontinuous coefficients, so far no domain decomposition methods have been designed to deal with solution jump across the interface.
The renowned Finite Element Tearing and Interconnecting method, developed in \cite{farhat1991method}, and the Balancing Domain Decomposition by Constraints approach, proposed in \cite{dohrmann2003preconditioner}, have been redesigned to address interface problems, as evidenced in \cite{pechstein2011analysis} and \cite{badia2019physics}. However, both methods require the continuity of the solution across the interface. Consequently, they are not suitable for solving heat transfer problems in composites, where the interface jump in the solution due to the TCR poses a challenge.
To fill this gap, we analyze the application of the OSM for solving the heat transfer in composites with TCR. The mathematical model under consideration is a two-dimensional steady-state diffusion-reaction equation, characterized by a continuous heat flux at the interface coupled with a temperature discontinuity proportional to the heat flux,  with the TCR as a factor.
This methodology can be readily extended to three-dimensional problems. 
Based on the characteristics of TCR, we first introduce the standard Robin condition for the OSM and establish the convergence results via both energy estimate and Fourier analysis. 
Toward fast convergence, the Robin parameter should be determined by optimizing the corresponding convergence factor, which is, however, not easy to accomplish. To achieve fast convergence, we propose the more analytically tractable scaled Robin transmission condition and systematically solve the optimization problem to determine the optimal parameters, thereby paving the way for practical applications.
The main findings are as follows.

1. The presence of TCR benefits the OSM algorithm, yielding mesh-\linebreak[4]independent convergence, a distinct contrast to the existing results for continuous interface conditions.

2. The large heterogeneity contrast improves the convergence of the OSM algorithm. 

3. In addition to heterogeneity contrast, the large thermal conductivities also accelerate the convergence of the OSM algorithm. This behavior fundamentally differs from the case without TCR.

4. The scaled Robin transmission condition not only is convenient for theoretical analysis, leading to an algorithm well-suited for practical applications, but also shows improved convergence over the standard Robin, as indicated by the numerical experiments.

The rest of this paper is organized as follows.
In Section 2, we first present the mathematical model incorporating TCR for describing the heat conduction in composites composed of two dissimilar materials. We then outline the OSM algorithm for addressing this problem and prove its convergence using both energy estimate and Fourier analysis. In Section 3, we introduce the scaled Robin transmission condition and optimize the transmission parameter by minimizing the convergence factor across all Fourier frequencies pertinent to the mesh. A comparison with existing results for the case without TCR reveals several noteworthy findings.
In Section 4, we conduct numerical experiments to illustrate the theoretical results. Finally, we draw conclusions in Section 5.

\section{Model problem and algorithm}
\label{sec:1}
\subsection{Model problem}
\label{sec:2}
We restrict our analysis to the scenario where the composite materials consist of two solid components, and the method can also be applied to composites of many-layered materials. 
The steady heat conduction in the composites can then be described by the following model
     \begin{equation}\label{2.1}
     \begin{aligned}
     -\nabla\cdot(\kappa(x) \nabla T)+c(x)T&=f \quad\mbox{in }\Omega,\\
      \mathcal{B}T&=g  \quad\mbox{on }\partial\Omega,
     \end{aligned}
     \end{equation}
where $\Omega:=\Omega_1\cup\Omega_2$ denotes a bounded open domain in $\mathbb{R}^d$ ($d=1,2,3$) consisting of the two bounded subdomains $\Omega_1$ and $\Omega_2$ occupied by different media, $\Omega_1\cap\Omega_2=\emptyset$. The interface between these two subdomains is defined as $\Gamma:=\partial{\Omega_1}\cap \partial{\Omega_2}$. 
The thermal conductivity $\kappa(x)$ and the reaction coefficient $c(x)$ are piecewise continuous functions
    \begin{align}\nonumber
     &
     \kappa(x)=\left\{
     \begin{aligned}
        &\kappa_{1}(x)   \quad \mbox{in}\ \Omega_1, \\
      &\kappa_{2}(x)   \quad \mbox{in}\ \Omega_2,  
     \end{aligned}
     \right.
     ~~~~~~~~~~
     c(x)=\left\{
     \begin{aligned}
        &c_{1}(x)   \quad \mbox{in}\ \Omega_1, \\
        &c_{2}(x)   \quad \mbox{in}\ \Omega_2,  
     \end{aligned}
     \right.
     \end{align}
where $\kappa_i(x)\geq \kappa^0>0$ and $c_i(x)\geq 0$ for $i=1,2$ are continuous, and $\kappa^0$ is a constant. 
Note that $c_i(x)>0$ can be associated with the reciprocal of the time step size when addressing time-dependent problems through a time-marching scheme. The function
$f$ represents a given source term, while $g$ denotes the specified boundary data relevant to the boundary operator $\mathcal{B}$ depending on application. 
The interface condition describes the fact that along the interface, the heat flux is continuous, while the difference in subdomain temperature is proportional to the heat flux, with the TCR serving as the proportionality coefficient. Mathematically, the interface condition can be expressed as:
\begin{equation}\label{2.2}
     \bm{n_{1}}\cdot(\kappa_{1}\nabla T_{1})=-\bm{n_{2}}\cdot(\kappa_{2}\nabla T_{2}), \quad 
       T_{1}-T_{2}=R_{c}\bm{n_{2}}\cdot(\kappa_{2}\nabla T_{2})
       \quad\mbox{on }\Gamma,
\end{equation}
where $R_c>0$ represents the TCR at the interface $\Gamma$, and $\bm{n_i}$ (for $i=1,2$) are the unit outward normal vectors corresponding to the subdomains $\Omega_i$. 
Notably, this interface condition  degenerates to the continuous case when $R_c=0$ and the problems have been solved with constant thermal conductivities and reaction coefficients in \cite{gander2019heterogeneous} for $c_2=0$ and in \cite{gander2015optimized} for $c_1=c_2=0$. 

Under the perfect contact scenario, the interface temperature remains continuous. However, the introduction of TCR in \eqref{2.2} disrupts this continuity, posing a significant challenge in the construction and analysis of the transmission conditions for the OSM. To address this issue, this paper, for the first time, proposes a two-domain OSM iterative scheme for the TCR-induced interfacial jump problem, aiming to clarify the impact of TCR on convergence, although the results can naturally be extended to multi-domain problems.

\subsection{Optimized Schwarz method}
Let $\mathcal{T}_{12}, \mathcal{T}_{21}$ be linear operators defined on the interface $\Gamma$. 
By combining the two equations from \eqref{2.2} linearly with the coefficients 
$(\mathcal{I}, -\mathcal{T}_{12})$ 
and $(-\mathcal{I}, \mathcal{T}_{21})$, 
we can express the transmission conditions in an iterative form,
     \begin{equation}\label{2.3}
     \left\{
     \begin{aligned}
      \bm{n_{1}}\cdot(\kappa_{1}\nabla T_{1}^{n})+\mathcal{T}_{12} T_{1}^{n}&=(-\mathcal{I}+\mathcal{T}_{12}R_{c})\bm{n_{2}}\cdot(\kappa_{2}\nabla T_{2}^{n-1})+\mathcal{T}_{12}T_{2}^{n-1} && \quad\mbox{on }\Gamma,\\
         \bm{n_{2}}\cdot(\kappa_{2}\nabla T_{2}^{n})+\mathcal{T}_{21} T_{2}^{n}&=(-\mathcal{I}+\mathcal{T}_{21}R_{c})\bm{n_{1}}\cdot(\kappa_{1}\nabla T_{1}^{n-1})+\mathcal{T}_{21}T_{1}^{n-1} && \quad\mbox{on }\Gamma,
     \end{aligned}
     \right.
     \end{equation}
where $n$ represents the iteration index. 
The OSM algorithm for the model \eqref{2.1}-\eqref{2.2} involves iteratively solving subproblems on the domains $\Omega_i$ ($i=1,2$), with boundary conditions given by \eqref{2.3} on the interface $\Gamma$, which are updated using the solutions from the adjacent subdomains obtained in the previous iteration. 
Define $h_1^{n-1}$ and $h_2^{n-1}$ as
    \begin{equation}\label{2.6}
    \left \{
    \begin{aligned}
    h_{1}^{n-1}&=(-\mathcal{I}+\mathcal{T}_{12}R_{c})\bm{n_{2}}\cdot(\kappa_{2}\nabla T_{2}^{n-1})+\mathcal{T}_{12}T_{2}^{n-1}
    \quad\mbox{on }\Gamma,\\
    h_{2}^{n-1}&=(-\mathcal{I}+\mathcal{T}_{21}R_{c})\bm{n_{1}}\cdot(\kappa_{1}\nabla T_{1}^{n-1})+\mathcal{T}_{21}T_{1}^{n-1}
     \quad\mbox{on }\Gamma.
     \end{aligned}
    \right.
    \end{equation}
The OSM algorithm then reads: solving until convergence for $n=1,2...$ with given initial data $h_1^0, h_2^0$  
     \begin{equation}\label{2.4}
     \left \{
     \begin{aligned}
     -\nabla\cdot(\kappa_1 \nabla T_{1}^{n}) +c_1 T_{1}^{n}&=f &&\quad\mbox{in }\Omega_{1},\\
     \mathcal{B}T_1^n&=g &&\quad\mbox{on }\partial\Omega_{1}\setminus \Gamma,\\
     \bm{n_{1}}\cdot(\kappa_{1}\nabla T_{1}^{n})+\mathcal{T}_{12} T_{1}^{n}&=h_{1}^{n-1} && \quad\mbox{on }\Gamma,
     \end{aligned}
     \right.
     \end{equation}
and
     \begin{equation}\label{2.5}
     \left \{
     \begin{aligned}
     -\nabla\cdot(\kappa_2 \nabla T_{2}^{n}) +c_2 T_{2}^{n}&=f &&\quad\mbox{in }\Omega_{2},\\
     \mathcal{B} T_2^n&=g &&\quad\mbox{on }\partial\Omega_{2}\setminus \Gamma,\\
     \bm{n_{2}}\cdot(\kappa_{2}\nabla T_{2}^{n})+\mathcal{T}_{21} T_{2}^{n}&=h_{2}^{n-1} && \quad\mbox{on }\Gamma,
     \end{aligned}
     \right.
     \end{equation}
where $h_i^{n-1},i=1,2$ are updated in the following way
    \begin{equation}\label{2.7}
    \left \{
    \begin{aligned}
    h_{1}^{n-1}&=\left(\mathcal{T}_{21}+\mathcal{T}_{12}-
    {\mathcal{T}_{12}R_c}\mathcal{T}_{21}\right)T_2^{n-2}
    +(-\mathcal{I}+{\mathcal{T}_{12}R_c})h_{2}^{n-2}
    \quad\mbox{on }\Gamma,\\
    h_{2}^{n-1}&=\left(\mathcal{T}_{12}+\mathcal{T}_{21}-
     {\mathcal{T}_{21}R_c}\mathcal{T}_{12}\right)T_1^{n-2}
     +(-\mathcal{I}+{\mathcal{T}_{21}R_c})h_{1}^{n-2}
     \quad\mbox{on }\Gamma.
     \end{aligned}
    \right.
    \end{equation} 
Note here that the updating formula \eqref{2.7} doe not require the normal derivative data, hence is easy to implement. 

We comment here that, for simplicity of analysis, we would like to analyze the OSM within a continuous framework to elucidate the relationship between its convergence rate and optimized transmission conditions, with a focus on the presentation of TCR. Similar analysis can also be performed at a discrete level, focusing on the effects of different discretization techniques and the domain's geometry on the performance of the OSM convergence, but it is beyond the scope of the current research.
\subsection{Convergence analysis via energy estimates}
A simple version of the transmission condition \eqref{2.3} can be obtained by setting $\mathcal{T}_{12}=\mathcal{T}_{21}=p \mathcal{I}$, where $p$ is a constant. 
We name it as the \emph{standard Robin} transmission condition since it degenerates to the commonly used Robin condition when $R_c=0$. 
We focus only on the Dirichlet boundary condition $\mathcal{B}=\mathcal{I}$ and the convergence of the OSM algorithm \eqref{2.4}-\eqref{2.7} with the standard Robin transmission condition can be proved using energy estimates.
\begin{theorem}[Convergence of standard Robin]\label{theorem1}
Assume that the constant $p$ is positive and satisfies $p<1/R_c$. For any initial values $h_1^{0}, h_2^{0}$, the numerical solution $T_i^{n}$ generated by the OSM algorithm \eqref{2.4}-\eqref{2.7} using the standard Robin transmission condition converges to the solution $T_i$ of the model problem \eqref{2.1}-\eqref{2.2} in $H^1$ norm. That is to say, it holds for $i=1,2$
$$\|T_i^{n}-T_i\|_{H^1(\Omega_i)}\rightarrow 0, \quad\mbox{ as } n\rightarrow \infty,$$
and the limits of the subdomain solutions, $T_i^\ast:=\lim_{n\rightarrow\infty}T_i^n$, satisfy the interface condition \eqref{2.2}.
\end{theorem}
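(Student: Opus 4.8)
The plan is to argue by linearity and extract a telescoping interface energy, in the spirit of Lions' classical estimate but adapted to the TCR coupling. Since \eqref{2.1}--\eqref{2.2} is linear and the exact solution is a fixed point of \eqref{2.4}--\eqref{2.7} (the standard-Robin conditions being mere linear combinations of \eqref{2.2}, as one checks directly), I would set $e_i^n:=T_i^n-T_i$; subtracting the exact relations, each $e_i^n$ solves the homogeneous subproblem $-\nabla\cdot(\kappa_i\nabla e_i^n)+c_ie_i^n=0$ in $\Omega_i$ with homogeneous Dirichlet data on $\partial\Omega_i\setminus\Gamma$ and the homogeneous standard-Robin transmission condition on $\Gamma$. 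The targets are then $\|e_i^n\|_{H^1(\Omega_i)}\to0$ and the consistency of the limiting interface data with \eqref{2.2}. Throughout I abbreviate $u_i^n:=e_i^n|_\Gamma$ and $q_i^n:=\bm{n_i}\cdot(\kappa_i\nabla e_i^n)$ on $\Gamma$.

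The first step is the per-subdomain energy identity: testing the homogeneous error equation with $e_i^n$ and integrating by parts annihilates the boundary term on $\partial\Omega_i\setminus\Gamma$, leaving
\[
a_i(e_i^n,e_i^n):=\int_{\Omega_i}\kappa_i|\nabla e_i^n|^2+c_i(e_i^n)^2\,dx=\int_\Gamma q_i^n u_i^n\,ds\ge0.
\]
The decisive step is to pick the right interface quantities. Setting $\alpha:=pR_c-1$, the data injected into the subdomains are $\xi_1^n:=q_1^n+pu_1^n=\alpha q_2^{n-1}+pu_2^{n-1}$ and $\xi_2^n:=q_2^n+pu_2^n=\alpha q_1^{n-1}+pu_1^{n-1}$, which dictates the \emph{outgoing} trace $\eta_i^n:=\alpha q_i^n+pu_i^n$, so that the coupling is simply $\xi_1^n=\eta_2^{n-1}$ and $\xi_2^n=\eta_1^{n-1}$. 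The elementary identity $(\xi_i^n)^2-(\eta_i^n)^2=(1-\alpha^2)(q_i^n)^2+2p(1-\alpha)\,q_i^nu_i^n$, integrated over $\Gamma$ and combined with the energy identity, gives
\[
\int_\Gamma(\eta_j^{n-1})^2\,ds\ \ge\ \int_\Gamma(\eta_i^n)^2\,ds+2p(1-\alpha)\,a_i(e_i^n,e_i^n),\qquad (i,j)\in\{(1,2),(2,1)\}.
\]
Here the hypothesis $0<p<1/R_c$ is exactly what is needed: it places $\alpha\in(-1,0)$, so $1-\alpha^2>0$ and $1-\alpha>0$, and \emph{both} right-hand terms are nonnegative.

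Summing the two inequalities yields $E^{n-1}-E^n\ge 2p(1-\alpha)\,[a_1(e_1^n,e_1^n)+a_2(e_2^n,e_2^n)]$ for the interface energy $E^n:=\int_\Gamma(\eta_1^n)^2+(\eta_2^n)^2\,ds\ge0$. Telescoping then bounds $\sum_n[a_1(e_1^n,e_1^n)+a_2(e_2^n,e_2^n)]$ by $E^0/(2p(1-\alpha))<\infty$, forcing $a_i(e_i^n,e_i^n)\to0$. Combining $\kappa_i\ge\kappa^0>0$ with the Poincar\'e--Friedrichs inequality supplied by the homogeneous Dirichlet condition on $\partial\Omega_i\setminus\Gamma$ (or, where $c_i>0$, the reaction term directly) upgrades this to $\|e_i^n\|_{H^1(\Omega_i)}\to0$. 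To close, I would pass to the limit in the two Robin conditions for $T_i^\ast$: adding them and using $\alpha\ne1$ recovers flux continuity $q_1=-q_2$, while subtracting them and using $1+\alpha=pR_c$ recovers the jump law $u_1-u_2=R_cq_2$, i.e.\ \eqref{2.2}. The hard part is precisely this second step: whereas for $R_c=0$ the transmitted quantity is the pure reflection $-q_i+pu_i$, the TCR rotates it into $\eta_i=\alpha q_i+pu_i$, and the argument stands or falls on recognizing this and on the fact that $p<1/R_c$ keeps $\alpha\in(-1,0)$, so that the $(q_i^n)^2$ coefficient $1-\alpha^2$ stays positive and the interface energy is genuinely dissipative.
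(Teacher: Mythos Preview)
Your argument is correct and follows the same Lions-type energy-estimate route as the paper: write the error equations, extract an interface pseudo-energy that decreases with dissipation controlled by the subdomain bilinear forms $a_i(e_i^n,e_i^n)$, telescope, invoke Poincar\'e--Friedrichs, and then check that the limiting Robin data recombine into \eqref{2.2}. The only difference is the choice of pseudo-energy: you track the outgoing trace $\eta_i^n=\alpha q_i^n+pu_i^n$ (so the coupling becomes the clean relay $\xi_i^n=\eta_j^{n-1}$ and dissipation appears only at level $n$), whereas the paper uses $E_i^n=\int_\Gamma|q_i^n|^2+|pu_i^n|^2\,ds$ and, after expanding via the transmission condition, collects dissipation terms at both levels $n$ and $n-1$---your packaging is a bit tidier, but the substance and the role of the hypothesis $0<p<1/R_c$ are identical.
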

\begin{proof}
Denote the subdomain errors at each iteration by $e_i^n:=T_i^n-T_i$.
By linearity, the errors $e_i^n, i=1,2$ satisfy
 \begin{equation}\label{2.4;}
     \left \{
     \begin{aligned}
     -\nabla\cdot(\kappa_i \nabla e_{i}^{n}) + c_i e_{i}^{n}
    &=0 &&\quad\mbox{in }\Omega_{i},\\
    e_i^n&=0 &&\quad\mbox{on }\partial\Omega_{i}\setminus \Gamma,\\
     \bm{n}_{i}\cdot(\kappa_{i}\nabla e_{i}^{n})+p e_{i}^{n}&=(-1+pR_{c})\bm{n}_{j}\cdot(\kappa_{j}\nabla e_{j}^{n-1})+pe_{j}^{n-1} && \quad\mbox{on }\Gamma,
     \end{aligned}
     \right.
     \end{equation}
where $j=3-i$.
Define the subspace of $H^1(\Omega_i)$ with vanishing boundary value on $\partial \Omega_i\setminus\Gamma$ as
\begin{equation}\nonumber
\begin{aligned}
H^1_{\Gamma}(\Omega_i):=\{v_i\in H^1(\Omega_i),v_i=0 \mbox{ on }\partial \Omega_i\setminus\Gamma\}.
\end{aligned}
\end{equation}
Testing the governing equation in \eqref{2.4;} by $v_i\in H^1_{\Gamma}(\Omega_i)$, using the boundary condition $e_i^n=0$ on $\partial\Omega_{i}\setminus \Gamma$, one now finds that the subdomain error $e_i^n$ satisfies
\begin{equation}\label{var_gamma}
\begin{aligned}
&a_i(e_i^{n},v_i)=\int_{\Gamma}{\kappa_{i}\bm{n}}_{i}\cdot\nabla e_{i}^{n} v_i ds, \quad \forall v_i\in H_{\Gamma}^1(\Omega_i),
\end{aligned}
\end{equation}
where $a_{i}(e_i^n,v_i)=\int_{\Omega_i}\kappa_i\nabla e_i^n\nabla v_i d\bm{x}+\int_{\Omega_i}c_i e_i^n v_i d\bm{x}$ is the  bilinear form defined on subdomain $\Omega_i$.
We consider first the case where $\min c_i(x)\geq c^0>0$, i.e., the reaction coefficient has a positive lower bound. 
Choosing $v_i=p e_i^{n}$ in formula \eqref{var_gamma}, we get
\begin{equation}\label{int}
\begin{aligned}
\int_{\Gamma}\kappa_i \bm{n}_{i}\cdot\nabla e_{i}^{n} p e_i^{n}ds&=a_i(e_i^n,p e_i^n)\\
& =\int_{\Omega_i}p(\kappa_i|\nabla e_i^{n}|^2+c_i|e_i^{n}|^2)d\bm{x}\\
&\geq p(\kappa^0\| \nabla e_i^{n}\|^2_{L^2(\Omega_i)}+c^0\|e_i^{n}\|^2_{L^2(\Omega_i)})\\
&\ge C\|e_i^{n}\|^2_{H^1(\Omega_i)},
\end{aligned}
\end{equation}
where $C= p\min\{\kappa^0,c^0\}$.
We now define the pseudo-energy as 
\begin{equation}\nonumber
\begin{aligned}
E^{n}:=E_1^{n}+E_2^{n} \mbox{ with } E_i^{n}=\int_{\Gamma}|\kappa_i\bm{n_{i}}\cdot\nabla e_{i}^{n}|^2+|pe_i^{n}|^2ds.
\end{aligned}
\end{equation}
We aim to show that this pseudo-energy is monotonically decreasing. 
A detailed calculation  using the transmission condition in \eqref{2.4;} gives 
\begin{equation}\nonumber
\begin{aligned}
&|\kappa_i\bm{n_{i}}\cdot\nabla e_{i}^{n}|^2+|pe_i^{n}|^2\\
=&(\kappa_i\bm{n_{i}}\cdot\nabla e_{i}^{n}+pe_i^{n})^2-2\kappa_i\bm{n_{i}}\cdot\nabla e_{i}^{n}pe_i^{n}\\
=&\big((-1+pR_{c})\kappa_j\bm{n_{j}}\cdot\nabla e_{j}^{n-1}+pe_{j}^{n-1}\big)^2-2\kappa_i\bm{n_{i}}\cdot\nabla e_{i}^{n}pe_i^{n}\\
=&|\kappa_j\bm{n_{j}}\cdot\nabla e_{j}^{n-1}|^2+|pe_{j}^{n-1}|^2-pR_c(2-pR_c)|\kappa_j\bm{n_{j}}\cdot\nabla e_{j}^{n-1}|^2\\
&-2(1-pR_{c})\kappa_j\bm{n_{j}}\cdot\nabla e_{j}^{n-1}pe_{j}^{n-1}-2\kappa_i\bm{n_{i}}\cdot\nabla e_{i}^{n}pe_i^{n}.
\end{aligned}
\end{equation}
Integrating over $\Gamma$ and summing over the two subdomains for $p<1/R_c$, one finds
\begin{equation}\label{E}
\begin{aligned}
E^{n}&=E^{n-1}-pR_c(2-pR_c)\Sigma_{j=1}^2\int_{\Gamma}|\kappa_j\bm{n_{j}}\cdot\nabla e_{j}^{n-1}|^2ds\\
&\quad -2(1-pR_{c})\Sigma_{j=1}^2\int_{\Gamma}\kappa_j\bm{n_{j}}\cdot\nabla e_{j}^{n-1}pe_{j}^{n-1}ds-2\Sigma_{i=1}^2\int_{\Gamma}\kappa_i\bm{n_{i}}\cdot\nabla e_{i}^{n}pe_i^{n}ds
\\
&\le E^{n-1}-2(1-pR_{c})\Sigma_{j=1}^2\int_{\Gamma}\kappa_j\bm{n_{j}}\cdot\nabla e_{j}^{n-1}pe_{j}^{n-1}ds
\\&\quad -2\Sigma_{i=1}^2\int_{\Gamma}\kappa_i\bm{n_{i}}\cdot\nabla e_{i}^{n}pe_i^{n}ds,
\end{aligned}
\end{equation}
where we have used the assumption $p<1/R_c$. Starting from inequality \eqref{E}, noting the assumption $p<1/R_c$ again and using \eqref{int}, we get
\begin{equation}\label{equation11}
\begin{aligned}
E^{n}\le&E^{n-1}-2(1-pR_{c})C\Sigma_{i=1}^{2}\|e_i^{n-1}\|^2_{H^1(\Omega_i)}-2C\Sigma_{i=1}^{2}\|e_i^{n}\|^2_{H^1(\Omega_i)}\\
\le& E^{n-1}-\hat{C}\Sigma_{i=1}^{2}(\|e_i^{n-1}\|^2_{H^1(\Omega_i)}+\|e_i^{n}\|^2_{H^1(\Omega_i)}),
\end{aligned}
\end{equation}
where $\hat{C}:=2(1-pR_c)C>0$. Inequality \eqref{equation11} shows that the pseudo-energy $E^n$ is decreasing.  Moreover, summing this inequality from $n=1$ to $N$, we find
\begin{equation}\label{inequality13}
E^{N}
+\hat{C}\Sigma_{n=1}^{N}\Sigma_{i=1}^{2}(\|e_i^{n-1}\|^2_{H^1(\Omega_i)}+\|e_i^{n}\|^2_{H^1(\Omega_i)})\le E^{0}.
\end{equation}
Since the initial energy $E^0$ is a constant and all terms on the left-hand side of inequality \eqref{inequality13} are positive, we deduce that $\Sigma_{n=1}^{N}\|e_i^{n}\|^2_{H^1(\Omega_i)}$ is bounded from above for any $N$. Therefore, the infinite series converges, 
\begin{equation*}
\Sigma_{n=1}^{\infty}\|e_i^{n}\|^2_{H^1(\Omega_i)}<\infty,\ i=1,2.
\end{equation*}
This strong statement implies that the general term in this series must go to $0$ as a function of $n$, i.e.,
\begin{equation}\label{err_1}
\|e_i^{n}\|^2_{H^1(\Omega_i)}\rightarrow 0 \mbox{ as } n\rightarrow \infty.
\end{equation}
For the more general case $c_i(x)\ge 0$, equation \eqref{int} becomes
\begin{equation}\nonumber
\begin{aligned}
\int_{\Gamma}\kappa_i \bm{n}_{i}\cdot\nabla e_{i}^{n} p e_i^{n}ds&=\int_{\Omega_i}p(\kappa_i|\nabla e_i^{n}|^2+c_i|e_i^{n}|^2)d\bm{x}\ge p\kappa^0\| \nabla e_i^{n}\|^2_{L^2(\Omega_i)}.
\end{aligned}
\end{equation}
The followed analysis shows that
\begin{equation}\label{nabla_err}
\|\nabla e_i^{n}\|^2_{L^2(\Omega_i)}\rightarrow 0 \mbox{ as } n\rightarrow \infty.
\end{equation}
In addition, it holds
\begin{equation}\label{bound_err}
e_i^n=0 \quad\mbox{on }\partial\Omega_{i}\setminus \Gamma, \quad \mbox{meas}(\partial\Omega_{i}\setminus \Gamma)>0, \ i=1,2.
\end{equation}
Therefore, combining \eqref{nabla_err} and \eqref{bound_err}, and applying the Poincaré-Friedrichs inequality, we deduce that
\begin{equation}\label{err_2}
\|e_i^{n}\|^2_{H^1(\Omega_i)}\le C_1\|\nabla e_i^{n}\|^2_{L^2(\Omega_i)}\rightarrow 0 \mbox{ as } n\rightarrow \infty,
\end{equation}
where $C_1$ is a constant.
Integrating the results in  \eqref{err_1} and \eqref{err_2}, we then obtain 
\begin{equation}\nonumber
\|T_i^{n}-T_i\|^2_{H^1(\Omega_i)}\rightarrow 0 \mbox{ as } n\rightarrow \infty.
\end{equation}

Now we demonstrate that $T_i^\ast=\lim_{n\rightarrow \infty}T_i^n, i=1,2$ satisfy equation \eqref{2.2}. 
Taking limit on both sides of \eqref{2.7}, one then finds
\begin{equation}\nonumber
    \left \{
    \begin{aligned}
     h_{1}^{*}&=\left(\mathcal{T}_{21}+\mathcal{T}_{12}
     {-\mathcal{T}_{12}R_c}\mathcal{T}_{21}\right)
     T_2^{*}{+(-\mathcal{I}+\mathcal{T}_{12}R_c)}h_{2}^{*}
     \quad\mbox{on }\Gamma,\\
     h_{2}^{*}&=\left(\mathcal{T}_{12}+\mathcal{T}_{21}
     {-\mathcal{T}_{21}R_c}\mathcal{T}_{12}\right)
     T_1^{*}{+(-\mathcal{I}+\mathcal{T}_{21}R_c)}h_{1}^{*}
     \quad\mbox{on }\Gamma,
    \end{aligned}
    \right.
    \end{equation}
where $h_i^{\ast}$ is defined via replacing $n-1$ with $\ast$ in equation \eqref{2.6}.
Substituting the converged form of equations \eqref{2.4}--\eqref{2.5} on $\Gamma$ into the above equations, we obtain
    \begin{equation}\label{2.9}
    \left \{
    \begin{aligned}
     {  \mathbf{n_1}\cdot(\kappa_1\nabla T_1^{*}) +
      \mathcal{T}_{12}T_1^{*}=
      (-\mathcal{I}+\mathcal{T}_{12}R_c)
      \mathbf{n_2}\cdot(\kappa_2\nabla T_2^{*}) +
      \mathcal{T}_{12}T_2^{*} \quad\mbox{on }\Gamma,
    }\\
     {  \mathbf{n_2}\cdot(\kappa_2\nabla T_2^{*}) +
      \mathcal{T}_{21}T_2^{*}=
      (-\mathcal{I}+\mathcal{T}_{21}R_c)
      \mathbf{n_1}\cdot(\kappa_1\nabla T_1^{*}) +
      \mathcal{T}_{21}T_1^{*} \quad\mbox{on }\Gamma,
    }
    \end{aligned}
    \right.
    \end{equation}
which implies that the limit $T_i^{*}$ satisfies equation \eqref{2.3}. 
Next we express equation \eqref{2.9} in vector form as follows:
    \begin{equation}\label{2.10}
     \begin{bmatrix}
      \mathcal{I} & \mathcal{T}_{12}\\
         {-\mathcal{I}+ \mathcal{T}_{21}R_c} & \mathcal{T}_{21}
     \end{bmatrix}
     \begin{bmatrix}
      \Phi_1\\ {T_1^{*}}
     \end{bmatrix}
     =\begin{bmatrix}
      {-\mathcal{I}+\mathcal{T}_{12}R_c} & \mathcal{T}_{12}\\
      \mathcal{I} & \mathcal{T}_{21}
     \end{bmatrix}
     \begin{bmatrix}
      \Phi_2\\ {T_2^{*}}
     \end{bmatrix},
    \end{equation}
where $\Phi_1:=\mathbf{n_1}\cdot(\kappa_1\nabla {T_1^{*}})$, $\Phi_2:=\mathbf{n_2}\cdot(\kappa_2\nabla {T_2^{*}})$. Noting that
    \[\begin{bmatrix}
     {-\mathcal{I}+\mathcal{T}_{12}R_c}   & \mathcal{T}_{12}\\
     \mathcal{I} & \mathcal{T}_{21}
    \end{bmatrix}
    =\begin{bmatrix}
     \mathcal{I} & \mathcal{T}_{12}\\
     {-\mathcal{I}+\mathcal{T}_{21}R_c} & \mathcal{T}_{21}\\
    \end{bmatrix}
    \begin{bmatrix}
     -\mathcal{I} & 0 \\
     {R_c\mathcal{I}} & \mathcal{I}
    \end{bmatrix},\]
equation \eqref{2.10} can be rewritten as
    \[\begin{bmatrix}
     \mathcal{I} & \mathcal{T}_{12}\\
     {-\mathcal{I}+\mathcal{T}_{21}R_c} & \mathcal{T}_{21}
    \end{bmatrix}
    \left(
    \begin{bmatrix}
     \Phi_1\\ {T_1^{*}}
    \end{bmatrix}-
    \begin{bmatrix}
     -\mathcal{I} & 0 \\
     {R_c\mathcal{I}} & \mathcal{I}
    \end{bmatrix}
    \begin{bmatrix}
     \Phi_2\\ {T_2^{*}}
    \end{bmatrix}\right)=0.\]
Using the assumption $p<1/R_c$ and the setting $\mathcal{T}_{12}=\mathcal{T}_{21}=p\mathcal{I}$, one finds that the coefficient matrix operator is invertible, we hence conclude that the limit $T_i^{*}$ satisfies the interface condition \eqref{2.2}.  
Therefore, we can assert that the limit $T_i^{*}$
  represents the solution to the model problem defined by equations \eqref{2.1} and \eqref{2.2}.
  \end{proof}
\begin{remark}
It is noteworthy that  the setting
$\mathcal{T}_{12}=\mathcal{T}_{21}=p\mathcal{I}$ of the standard Robin condition is critical for the convergence proof above. The convergence proof for the more general cases where 
$\mathcal{T}_{12}\neq \mathcal{T}_{21}$ via energy estimate remains an open problem.  
However, choosing  $\mathcal{T}_{12}$ and $\mathcal{T}_{21}$ differently will offer great flexibility in the transmission condition design and finally lead to faster convergence.
Along this line, the Fourier analysis can provide more insights, such as the convergence rate that can be optimized to reach an optimal convergence performance.
\end{remark}
\subsection{Convergence analysis via Fourier analysis}\label{sec}
The energy estimates only provide a convergence result for the OSM algorithm \eqref{2.4}-\eqref{2.7} when the standard Robin condition is applied, no convergence rate estimate can be obtained. 
Hereby, we introduce an alternative approach, the Fourier analysis, to step into more insights of the OSM algorithm \eqref{2.4}-\eqref{2.7}. 
To this end, the Fourier transform
\begin{equation}\nonumber
\hat{f}(k)=\mathcal{F}(f):=\int_{-\infty}^{\infty}f(x)e^{-ikx}dx,\quad f(x)=\mathcal{F}^{-1}(\hat{f}):=\frac{1}{2\pi}\int_{-\infty}^{\infty}\hat{f}(k)e^{ikx}dk,
\end{equation}
will be applied, where $k\in\mathbb{R}$ denotes the Fourier frequency. 
Thus, we consider the two-dimensional problem and assume that the defining domain is infinite, 
\begin{equation}\label{infinitedomain}
\Omega=\Omega_1\cup\Omega_2\quad \mbox{with}\quad \Omega_1:=(-\infty,0)\times \mathbb{R},\ \Omega_2:=(0,\infty)\times \mathbb{R},
\end{equation}
such that the Fourier transform can be applied directly.
In addition, we assume also that both $k_i(x)$ and $c_i(x)$ are constant functions independent of the variable $x$, so that the differential operators can be explicitly described by their Fourier symbols.
By linearity, it suffices to consider the case $f= 0$ and analyze the convergence to zero. Applying a Fourier transform in the $x_2$-direction to both equations \eqref{2.4} and \eqref{2.5} (noting that the outer boundary operators $\mathcal{B}_i$ are absent due to the infinite domain assumption) results in the following transformed system:
    \begin{flalign}
    &\left \{
    \begin{aligned}\label{3.1}
    &(-\kappa_{1}\partial_{x_1 x_1}+c_{1}+\kappa_{1}k^2)\hat{T}^{n}_{1}=0  &&\quad x_1\in(-\infty, 0),\\
    &(\kappa_{1} \partial_{x_1}+\tau_{12}) \hat{T}^{n}_{1}=((1-\tau_{12}R_{c})\kappa_{2}\partial_{x_1}+\tau_{12})\hat{T}^{n-1}_{2}  && \quad x_1=0,
    \end{aligned}
    \right.\\
    &\left\{
    \begin{aligned}\label{3.2}
    &(-\kappa_{2}\partial_{x_1 x_1}+c_{2}+\kappa_{2}k^2)\hat{T}^{n}_{2}=0  &&\quad x_1\in(0,+\infty),\\
    &(-\kappa_{2} \partial_{x_1}+\tau_{21} )\hat{T}^{n}_{2}=((-1+\tau_{21}R_{c})\kappa_{1}\partial_{x_1}+\tau_{21})\hat{T}^{n-1}_{1}  && \quad x_1=0,
    \end{aligned}
    \right.
    \end{flalign}
where we have used the simple fact $\bm{n_1}=(1,0),\bm{n_2}=(-1,0)$ to simplify the notation, $k$ denotes the Fourier frequency, and $\tau_{12},\tau_{21}$ are the Fourier symbols of the linear operators $\mathcal{T}_{12},\mathcal{T}_{21}$, respectively. 
Solving the first equations in \eqref{3.1} and \eqref{3.2}, we obtain by the property that the solution should be bounded at infinity
\begin{equation}\label{3.3}
    \hat{T}_{1}^{n}=A_1^{n}e^{\lambda_{1}(k)x_1},\quad\hat{T}_{2}^{n}=B_2^{n}e^{-\lambda_{2}(k)x_1},
    \end{equation}
where 
    \begin{equation}\label{characteristicroot}
    \lambda_{i}(k)=\frac{\sqrt{c_i\kappa_i+\kappa^2_ik^2}}{\kappa_i}.
    \end{equation}
Substituting the solutions \eqref{3.3} into the transformed transmission conditions in \eqref{3.1} and \eqref{3.2}, and iterating between the subdomains, we derive
    \begin{equation}\nonumber 
    A_1^{n}=\rho A_1^{n-2},\quad
    B_2^{n}=\rho B_2^{n-2},          
    \end{equation}
where
    \begin{equation}\label{3.5}
    \rho(k,\tau_{12},\tau_{21})=\frac{(1-\tau_{12}R_{c})-\tau_{12}\xi_{2}(k)}
    {1+\tau_{12}\xi_{1}(k)}\cdot \frac{(1-\tau_{21}R_{c})-\tau_{21}\xi_{1}(k)}
    {1+\tau_{21}\xi_{2}(k)}
    \end{equation}
is the convergence factor with 
\begin{equation}\label{f_k}
\xi_1(k)=\frac{1}{\kappa_1\lambda_1(k)},\quad \xi_2(k)=\frac{1}{\kappa_2\lambda_2(k)}.
\end{equation}
Noting that the convergence factor $\rho$ is symmetric in the Fourier frequency $k$, we only need to consider the positive frequencies $k\in[k_{min},k_{max}]$ involved in the numerical discretization, with $k_{min}$ being the smallest and $k_{max}$ the largest positive frequencies.

Now, the convergence of the OSM algorithm with standard Robin transmission condition (i.e., $\mathcal{T}_{12}=\mathcal{T}_{21}=p\mathcal{I}$ so that $\tau_{12}=\tau_{21}=p$) can be analyzed below. 
\begin{theorem}[Convergence of standard Robin]\label{theorem2}
Assume that $R_c,\kappa_i>0$, $c_i\ge0$, $i=1,2$ are constants. Then, for all positive $p<2/R_c$,  the convergence factor corresponding to the setting $\tau_{12}=\tau_{21}=p$ satisfies
    \begin{equation}\label{rho<1} 
    \left|\rho(k,p,p) \right|= \left|\frac{1-p(R_{c}+\xi_{2}(k))}
    {1+p\xi_{1}(k)}\cdot \frac{1-p(R_{c}+\xi_{1}(k))}
    {1+p\xi_{2}(k)}\right|<1 ,\quad \forall k\in[k_{min},k_{max}].
    \end{equation}
In other words, when applied to solving the model problem \eqref{2.1}-\eqref{2.2} with domain setting \eqref{infinitedomain}, the OSM algorithm \eqref{2.4}-\eqref{2.7} with the standard Robin condition converges for all positive $p<{2}/{R_c}$.
\end{theorem}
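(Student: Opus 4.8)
The plan is to reduce the claim $|\rho(k,p,p)|<1$ to a purely algebraic inequality in the two positive quantities $\xi_1=\xi_1(k)$ and $\xi_2=\xi_2(k)$, and then settle it by one short computation. First I would record that for positive frequencies $k\in[k_{min},k_{max}]$ the characteristic roots obey $\lambda_i(k)>0$, so that $\xi_i=1/(\kappa_i\lambda_i)>0$; since $p>0$ this makes the denominator of $\rho$ strictly positive. I can therefore write $\rho=N/D$ with
\begin{equation}\nonumber
D:=(1+p\xi_1)(1+p\xi_2)>0,\qquad N:=\bigl[1-p(R_c+\xi_2)\bigr]\bigl[1-p(R_c+\xi_1)\bigr],
\end{equation}
which reduces the goal to proving $|N|<D$, i.e. the two-sided bound $-D<N<D$.

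The key structural point is that one must \emph{not} estimate the two factors of $\rho$ separately: an individual factor such as $[1-p(R_c+\xi_2)]/(1+p\xi_1)$ can exceed one in modulus, and only the product is contractive. I would therefore work directly with the quantities $D-N$ and $D+N$. Expanding both products and cancelling, I expect the difference to collapse to the factorized form
\begin{equation}\nonumber
D-N=p\,(\xi_1+\xi_2+R_c)\,(2-pR_c),
\end{equation}
which is manifestly positive, since $p>0$ and $\xi_1+\xi_2+R_c>0$, precisely because the hypothesis $p<2/R_c$ forces $2-pR_c>0$. This one identity both gives $N<D$ and pinpoints exactly where the admissible range of $p$ is needed.

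For the lower bound I would compute $D+N$ and group the $R_c$-terms into a perfect square, expecting an expression of the form
\begin{equation}\nonumber
D+N=1+(1-pR_c)^2+2p^2\xi_1\xi_2+p^2R_c(\xi_1+\xi_2),
\end{equation}
in which every summand is nonnegative and the leading term $1$ is strictly positive, so $D+N>0$ holds unconditionally. Combining the two computations yields $-D<N<D$, hence $|\rho|=|N|/D<1$ for every $k\in[k_{min},k_{max}]$, which is the assertion. The symmetry of $N$ and $D$ under $\xi_1\leftrightarrow\xi_2$ provides a useful consistency check on the expansions.

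Since the proof is in essence a verification, there is no deep conceptual obstacle; the only delicate part is the bookkeeping in expanding $N$ and $D$ so that $D-N$ really factors and $D+N$ really regroups as above. The genuinely instructive observation, worth isolating, is that the assumption $p<2/R_c$ enters exactly once, through the sign of $2-pR_c$ in $D-N$; this also explains why the Fourier analysis admits a strictly larger parameter range than the energy estimate of Theorem~\ref{theorem1}, which required $p<1/R_c$.
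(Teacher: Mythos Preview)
Your proposal is correct and follows essentially the same route as the paper: both reduce $|\rho|<1$ to the pair of sign conditions $D-N>0$ and $D+N>0$, factor or regroup them into manifestly signed expressions, and observe that the hypothesis $p<2/R_c$ enters only through the factor $2-pR_c$ in $D-N$. Your exposition is simply more explicit---the paper condenses this to ``detailed calculation shows''---and your formula for $D+N$ in fact corrects what appears to be a missing $p^2$ in the paper's second displayed inequality.
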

\begin{proof}
Detailed calculation shows that, to prove the inequality \eqref{rho<1}, it is sufficient to show   
    \begin{equation}\label{3.13}
    \begin{aligned}
    & p(pR_c-2)(R_c+\xi_1(k)+\xi_2(k))<0,\\
    &(p R_c-1)^2+1+(\xi_1(k)+\xi_2(k))R_c+2\xi_1(k)\xi_2(k)p^2>0.
    \end{aligned}
    \end{equation}
Using the fact that both $\xi_1(k)$ and $\xi_2(k)$ are positive (see also Lemma \ref{lemma3.1}), one finds that the second inequality in \eqref{3.13} holds naturally and the first inequality is ensured by the condition $0<p<2/R_c$. 

\end{proof}
\begin{remark}
The Fourier analysis in Theorem \ref{theorem2} shows a wider range of positive $p$ values for which the OSM algorithm \eqref{2.4}-\eqref{2.7} converges, compared to the energy estimate in Theorem \ref{theorem1}. However, both analyses indicate that the Robin parameter $p$ must be bounded from above by a factor of $1/R_c$. This means that for solving problems with large TCR, the Robin parameter 
$p$ should be chosen to be small, although the optimal choice is still unclear.
\end{remark}
\begin{remark}
The Fourier analysis directly applies to 3-dimensional problems. Consider the defining domain $\Omega=\mathbb{R}^3=\Omega_1\cup \Omega_2$ where $\Omega_1:=(-\infty,0)\times(-\infty,\infty)^2$ and $\Omega_2:=(0,+\infty)\times(-\infty,\infty)^2$. One must then perform the Fourier transform in both the $x_2$ and $x_3$ directions, with $k_2$ and $k_3$ denoting the corresponding Fourier frequencies. The characteristic roots are then given by $ \lambda_{i}(k_2,k_3)=\frac{\sqrt{c_i\kappa_i+\kappa^2_i(k_2^2+k_3^2)}}{\kappa_i}$, which is the same as the $\lambda_i(k)$ given in equation \eqref{characteristicroot}, except that $k^2$ is replaced by $k_2^2+k_3^2$. 
Therefore, the convergence factor retains the same form as in the 2-dimensional case, and the convergence result in Theorem \ref{theorem2} still holds.
\end{remark}
Given the explicit formula of the convergence factor $\rho(k,p,p)$ for the OSM algorithm \eqref{2.4}-\eqref{2.7} with the standard Robin transmission condition, actually, one can choose the optimal parameter $p$ by optimizing the convergence factor $\rho(k,p,p)$. However, this task is not straightforward, as the convergence factor $\rho(k,p,p)$ is too complicated to be explicitly analyzed. Additionally, the convergence factor $\rho$ in the Fourier frequency domain provides significant flexibility for analyzing more general transmission conditions. Therefore, in the next section, we will design and optimize a more efficient transmission condition. 

\section{Scaled Robin transmission condition}\label{sec3}
Setting the Fourier symbols $\tau_{12}$ and $\tau_{21}$ as
\begin{equation}\nonumber
\tau_{12}^{opt}(k):=\frac{1}{\xi_{2}(k)+R_{c}}, \quad \tau_{21}^{opt}(k):=\frac{1}{\xi_{1}(k)+R_{c}}
    \end{equation}
results in the convergence factor  $\rho$ in \eqref{3.5} being identically zero, which indicates that the corresponding OSM algorithm converges in just two iterations.
However, performing an inverse Fourier transform on the symbols  $\tau_{12}^{opt}(k)$ and $\tau_{21}^{opt}(k)$ yields non-local operators $\mathcal{T}_{12}^{opt}$ and $\mathcal{T}_{21}^{opt}$, which are computationally expensive in practice.
Consequently, our objective is to seek local approximations of the optimal Fourier symbols to ensure that the algorithm remains straightforward to implement while achieving rapid convergence. 
Choosing $\tau_{12},\tau_{21}$ in a structurally consistent manner as follows
\begin{equation}\label{3.16}
    \tau_{12}=\frac{1}{\xi_{2}(p)+R_{c}},\quad \tau_{21}=\frac{1}{\xi_{1}(p)+R_{c}}
    \end{equation}
greatly simplifies the optimization for predicting the best transmission parameter, while also reflecting the heterogeneity between layered materials, where $p$ is a constant to be determined for fast convergence. Since it scales at each subdomain using the subdomain parameters, we refer to this transmission condition as the \emph{scaled Robin}.
We note that, for the frequency $k=p$, the scaled Robin transmission condition yields an exact solver that converges within two iterations.
By inserting \eqref{3.16} into equation \eqref{3.5}, one finds the convergence factor corresponding to the scaled Robin transmission condition:
\begin{equation}\nonumber
    \rho_{SR}(k,p):=\frac{\xi_{1}(p)-\xi_{1}(k)}
    {\xi_{1}(p)+R_{c}+\xi_{2}(k)}\cdot \frac{\xi_{2}(p)-\xi_{2}(k)}
    {\xi_{2}(p)+R_{c}+\xi_{1}(k)}.
    \end{equation} 
To  identify the optimal transmission parameter $p^\ast$ for the associated OSM algorithm, and noting the fact that $\xi_i(p)=\xi_i(-p)$, we only need to address the following optimization problem:
    \begin{equation}\label{3.18}
    \underset{p>0}{\min}
    \underset{k\in [k_{min},k_{max}]}{\max}\left|\rho_{SR}(k,p)\right|,
    \end{equation}
where $k_{min}$, the smallest frequency involved, is estimated as $\pi/|\Gamma|$ if Dirichlet boundary conditions are applied, while $k_{max}$, the largest frequency, is estimated as $\pi/h$ when a uniform mesh of size $h$ is used \cite{strauss2007partial}.

The following lemma is essential for analyzing the optimization problem \eqref{3.18}.

\begin{lemma}\label{lemma3.1}
Assume that $\kappa_i > 0$ and $c_i \ge 0$ for $i = 1, 2$. The functions $\xi_i(k),i=1,2$ are positive and monotonically decrease in $k$ for $k>0$.
\end{lemma}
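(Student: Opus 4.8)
The plan is to reduce both claims to an elementary fact about a single scalar function, since the apparent complexity of $\xi_i$ dissolves once the explicit characteristic root from \eqref{characteristicroot} is substituted. First I would combine \eqref{characteristicroot} and \eqref{f_k} to note that $\kappa_i\lambda_i(k)=\sqrt{c_i\kappa_i+\kappa_i^2k^2}$, so that
$$\xi_i(k)=\frac{1}{\sqrt{c_i\kappa_i+\kappa_i^2k^2}},\qquad i=1,2.$$
With this closed form in hand, both assertions become transparent.

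Positivity is then immediate: for $\kappa_i>0$, $c_i\ge0$ and $k>0$, the radicand $c_i\kappa_i+\kappa_i^2k^2$ is strictly positive (the second term alone is positive), so the square root is a well-defined positive number and its reciprocal $\xi_i(k)$ is positive. For the monotonicity, I would introduce the radicand $g_i(k):=c_i\kappa_i+\kappa_i^2k^2$ and observe that it is strictly increasing on $(0,\infty)$, since $g_i'(k)=2\kappa_i^2k>0$ there. Taking the square root preserves the ordering and taking the reciprocal reverses it, so $\xi_i(k)$ is strictly decreasing; equivalently, a direct differentiation gives $\xi_i'(k)=-\kappa_i^2k\,(c_i\kappa_i+\kappa_i^2k^2)^{-3/2}<0$ for $k>0$, which I would present as the one-line verification.

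There is essentially no obstacle in this lemma, which serves only as a preparatory monotonicity fact for the optimization problem \eqref{3.18}; the single point that merits a word of care is that the denominator never vanishes, which is exactly why the restriction $k>0$ (rather than $k\ge0$) is imposed when $c_i=0$, since otherwise $\xi_i(0)$ would be infinite. The argument is complete once the explicit formula is written down, so I would keep the write-up to the substitution, the sign of the radicand, and the sign of its derivative.
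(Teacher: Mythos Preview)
Your proposal is correct and essentially identical to the paper's proof: both reduce to the explicit formula $\xi_i(k)=(c_i\kappa_i+\kappa_i^2k^2)^{-1/2}$, read off positivity, and verify monotonicity via the derivative $\xi_i'(k)=-\kappa_i^2k\,(c_i\kappa_i+\kappa_i^2k^2)^{-3/2}<0$. The only difference is cosmetic---you spell out the substitution and the radicand argument a bit more carefully, while the paper states positivity in one line and goes straight to the derivative.
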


\begin{proof}
From equation \eqref{f_k}, it follows that $\xi_i(k) > 0$ for $i = 1, 2$.
Taking the derivative of $\xi_i(k)$ with respect to $k$, we obtain
    \begin{equation}\nonumber 
     \frac{\partial \xi_i(k)}{\partial k}=-\frac{k\kappa_i^2}{(k^2\kappa_i^2+c_i\kappa_i)^{3/2}}<0,\quad i=1,2.
    \end{equation}
This indicates that $\xi_i(k)$, for $i = 1, 2$, are strictly monotonically decreasing in $k$ for $k > 0$.
\end{proof}

Now we are ready to show the following basic result on the convergence of the OSM algorithm \eqref{2.4}-\eqref{2.7} with scaled Robin transmission condition. 
\begin{theorem}[Convergence of scaled Robin]\label{theorem3.12}
Assume that $R_c,\kappa_i>0$ and $c_i\ge0$ for $i=1,2$. For any real $ p$, the convergence factor $\rho_{SR}$ satisfies
\begin{equation}\nonumber 
       \left|\rho_{SR}(k,p) \right|= \left|\frac{\xi_{1}(p)-\xi_{1}(k)}
      {\xi_{1}(p)+R_{c}+\xi_{2}(k)}\cdot \frac{\xi_{2}(p)-\xi_{2}(k)}
      {\xi_{2}(p)+R_{c}+\xi_{1}(k)}\right|<1 ,\quad  \forall k\in[k_{min},k_{max}].
    \end{equation}
\end{theorem}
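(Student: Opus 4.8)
The plan is to bound the product $|\rho_{SR}(k,p)|$ as a whole rather than each of its two factors separately. This choice is forced: in the limit $p\to\infty$ one has $\xi_i(p)\to 0$, so the first factor tends to $\xi_1(k)/(R_c+\xi_2(k))$, which exceeds one whenever $\xi_1(k)>R_c+\xi_2(k)$ (easily arranged, e.g.\ with $\kappa_1$ small, $\kappa_2$ large, $c_1=0$). Hence no factorwise estimate can work in general, and only the product is controllable. First I would use the evenness noted in the text, $\xi_i(p)=\xi_i(-p)$, to assume without loss of generality that $p\ge 0$; the case $p=k$ is immediate, since then $\xi_1(p)=\xi_1(k)$ and $\xi_2(p)=\xi_2(k)$, both numerators vanish, and $\rho_{SR}(k,p)=0$.

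For $p\neq k$, each denominator is a sum of strictly positive quantities together with $R_c>0$, hence positive, so the inequality $|\rho_{SR}(k,p)|<1$ is equivalent to
\begin{equation}\nonumber
\bigl|\xi_1(p)-\xi_1(k)\bigr|\,\bigl|\xi_2(p)-\xi_2(k)\bigr|
<\bigl(\xi_1(p)+R_c+\xi_2(k)\bigr)\bigl(\xi_2(p)+R_c+\xi_1(k)\bigr).
\end{equation}
The decisive observation---and the reason Lemma \ref{lemma3.1} is indispensable---is that $\xi_1$ and $\xi_2$ are \emph{both} strictly decreasing on $(0,\infty)$. Consequently the two differences $\xi_1(p)-\xi_1(k)$ and $\xi_2(p)-\xi_2(k)$ always carry the \emph{same} sign, namely that of $k-p$. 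This sign alignment is exactly what tames the product; were the two numerators allowed to have opposite signs, the elementary pairing below would collapse.

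I would then split into the two sign cases. When $p<k$ both differences are positive, and the two natural (straight) bounds $\xi_1(p)-\xi_1(k)<\xi_1(p)+R_c+\xi_2(k)$ and $\xi_2(p)-\xi_2(k)<\xi_2(p)+R_c+\xi_1(k)$ hold, each reducing to the trivial statement that $R_c>0$ plus a positive term is positive; multiplying the two (all quantities being positive) gives the claim. When $p>k$ both differences are negative, and here the straight pairing fails, so I instead pair each numerator with the \emph{opposite} denominator factor: $\xi_1(k)-\xi_1(p)<\xi_1(k)+R_c+\xi_2(p)$ and $\xi_2(k)-\xi_2(p)<\xi_2(k)+R_c+\xi_1(p)$, each again reducing to positivity of the surviving terms together with $R_c>0$. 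Multiplying these matched bounds reproduces the required product inequality after reordering the right-hand factors.

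The main obstacle is conceptual rather than computational: one must resist bounding $|\rho_{SR}|$ factor by factor (which is genuinely false, as the limit above shows) and instead recognise that the common monotonicity of $\xi_1$ and $\xi_2$ pins the two numerators to a single sign, after which the correct---and case-dependent---pairing of numerator and denominator factors makes every estimate elementary. The only care required is to select, in each sign case, the pairing in which the leftover terms are all positive, so that $R_c>0$ delivers strict inequality.
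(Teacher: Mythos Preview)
Your proof is correct and rests on the same key observation as the paper's: the common monotonicity of $\xi_1$ and $\xi_2$ forces the two numerator factors to share a sign, so $|\rho_{SR}|=\rho_{SR}$ and one is left with a single product inequality. Where you split into the cases $p<k$ and $p>k$ and pair each numerator with the appropriate denominator factor, the paper instead expands both sides once and observes that the difference factors exactly as $(\xi_1(p)+R_c+\xi_2(p))(\xi_1(k)+R_c+\xi_2(k))>0$, handling both sign cases simultaneously; the two arguments are otherwise equivalent.
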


\begin{proof}
From the monotonicity of $\xi_i(k)$ indicating by Lemma \ref{lemma3.1}, both terms in $\rho_{SR}$ share the same sign, leading to $\left|\rho_{SR}(k,p) \right|=\rho_{SR}(k,p)$. 
To establish convergence, we need to demonstrate
    \begin{equation}\nonumber
     \rho_{SR}(k,p)=\frac{\xi_{1}(p)-\xi_{1}(k)}
     {\xi_{1}(p)+R_{c}+\xi_{2}(k)}\cdot \frac{\xi_{2}(p)-\xi_{2}(k)}
     {\xi_{2}(p)+R_{c}+\xi_{1}(k)}<1.
    \end{equation}
This expression can be reformulated as
    \begin{equation}\nonumber
     (\xi_{1}(p)-\xi_{1}(k))(\xi_{2}(p)-\xi_{2}(k))<(\xi_{1}(p)+R_{c}+\xi_{2}(k))(\xi_{2}(p)+R_{c}+\xi_{1}(k)).
    \end{equation}
Upon expanding both sides, it becomes apparent that this inequality holds if and only if $(\xi_{1}(p)+R_{c}+\xi_{2}(p))(\xi_{1}(k)+R_{c}+\xi_{2}(k))>0$. This condition is evidently satisfied due to the nonnegativity of $\xi_i$.
\end{proof}

The first step for solving the optimization problem \eqref{3.18} involves constraining the search range for the optimal transmission parameter, as shown below.

\begin{lemma}[Restriction for the interval of $p$]\label{lemma3.13}
If $p^*$ is a solution to the min-max problem \eqref{3.18}, then $p^*$ must reside within the interval $[k_{min},k_{max}]$. Thus, the following min-max problems are equivalent:
    \begin{equation}\nonumber 
    \underset{p>0}{\min}
    \underset{k\in [k_{min},k_{max}]}{\max}\left|\rho_{SR}(k,p)\right|\Longleftrightarrow\underset{p\in[k_{min},k_{max}]}{\min}
    \underset{k\in [k_{min},k_{max}]}{\max}\left|\rho_{SR}(k,p)\right|.
    \end{equation}
\end{lemma}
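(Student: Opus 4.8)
The plan is to recast the problem in terms of the objective $J(p):=\max_{k\in[k_{min},k_{max}]}\left|\rho_{SR}(k,p)\right|$ and to show that $J$ is strictly decreasing on $(0,k_{min}]$ and strictly increasing on $[k_{max},\infty)$. Once this is established, any $p<k_{min}$ satisfies $J(p)>J(k_{min})$ and any $p>k_{max}$ satisfies $J(p)>J(k_{max})$, so no minimizer can lie outside $[k_{min},k_{max}]$, which is exactly the asserted restriction and the claimed equivalence of the two min-max problems. As a preliminary reduction, I would recall from the proof of Theorem \ref{theorem3.12} that, thanks to the monotonicity of $\xi_i$ in Lemma \ref{lemma3.1}, the two factors of $\rho_{SR}(k,p)$ always share the same sign, so that $\left|\rho_{SR}(k,p)\right|=\rho_{SR}(k,p)\ge 0$ and the absolute value may be dropped throughout.

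The heart of the argument is a monotonicity-in-$p$ estimate for each fixed $k$. Treating $a:=\xi_1(p)$ as the only $p$-dependent quantity in the first factor $\frac{\xi_1(p)-\xi_1(k)}{\xi_1(p)+R_c+\xi_2(k)}$, I would compute $\partial_a\frac{a-\xi_1(k)}{a+R_c+\xi_2(k)}=\frac{\xi_1(k)+R_c+\xi_2(k)}{(a+R_c+\xi_2(k))^2}>0$, so this factor is strictly increasing in $\xi_1(p)$; the analogous computation shows the second factor is strictly increasing in $\xi_2(p)$. Since Lemma \ref{lemma3.1} gives that $\xi_i(p)$ is strictly decreasing in $p>0$, each factor is therefore strictly decreasing in $p$. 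For $p\le k_{min}$ every $k\in[k_{min},k_{max}]$ obeys $k\ge p$, so both numerators $\xi_i(p)-\xi_i(k)\ge 0$, both factors are nonnegative, and a product of two nonnegative strictly decreasing functions is strictly decreasing in $p$; for $p\ge k_{max}$ one has $k\le p$, both factors are nonpositive and each grows more negative as $p$ increases, so their nonnegative product is strictly increasing in $p$.

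Passing from each fixed $k$ to the maximum is immediate: if $g(k,\cdot)$ is strictly monotone for every $k$, then $\max_k g(k,\cdot)$ is strictly monotone in the same direction, since for $p_1<p_2$ one has $J(p_2)=g(k^\ast,p_2)<g(k^\ast,p_1)\le J(p_1)$ at a maximizer $k^\ast$. This transfers the pointwise monotonicity to $J$ on the two exterior rays and completes the argument. I expect the only delicate point to be the bookkeeping of signs: for $p$ interior to $[k_{min},k_{max}]$ the sign of $\xi_i(p)-\xi_i(k)$ flips as $k$ crosses $p$, so the clean ``each factor is monotone in $p$'' statement cannot be claimed across the whole range; care is needed to invoke it precisely on $(0,k_{min}]$ and $[k_{max},\infty)$, where the exterior position of $p$ pins down the signs, and not to over-extend it into the interior where it is false.
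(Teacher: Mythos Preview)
Your proposal is correct and follows essentially the same approach as the paper: both arguments establish that $\partial_p\rho_{SR}(k,p)<0$ for all $k\in[k_{\min},k_{\max}]$ when $p<k_{\min}$ (and $>0$ when $p>k_{\max}$) via the monotonicity of $\xi_i$ from Lemma~\ref{lemma3.1}, then conclude no minimizer can lie outside $[k_{\min},k_{\max}]$. The only cosmetic difference is that the paper computes $\partial_p\rho_{SR}$ directly as a single quotient with numerator $g_p$, whereas you factor through the chain rule (each factor increasing in $\xi_i(p)$, $\xi_i(p)$ decreasing in $p$) and make the passage from pointwise monotonicity to monotonicity of $J(p)=\max_k\rho_{SR}(k,p)$ explicit; the underlying idea is identical.
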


\begin{proof}
We directly analyze the convergence factor $\rho_{SR}(k,p)$, as it is guaranteed to be positive. 
Calculating the derivative of   $\rho_{SR}$ with respect to $p$, we obtain 
    \begin{equation}\label{3.19}
    \frac{\partial \rho_{SR}}{\partial{p}}=\frac{g_p(\xi_1(k)+\xi_2(k)+R_c)}{(\xi_1(p)+\xi_2(k)+R_c)^2(\xi_2(p)+\xi_1(k)+R_c)^2},
    \end{equation}
where
 $g_p=\frac{d\xi_1(p)}{dp}(\xi_2(p)-\xi_2(k))(\xi_2(p)+\xi_1(k)+R_c)+\frac{d\xi_2(p)}{dp}(\xi_1(p)-\xi_1(k))(\xi_1(p)+\xi_2(k)+R_c)$. 
 Utilizing the fact that $\xi_i(p),i=1,2$, are monotonically decreasing functions of $p$ (cf. Lemma \ref{lemma3.1}), we observe that, for all $k \in [k_{min}, k_{max}]$, the derivative  $\frac{\partial \rho_{SR}}{\partial p} (k,p)< 0$ when $p<k_{min}$. 
This indicates that we are not at an optimum since increasing $p$ would lead to a decrease in the convergence factor for all frequencies $k \in [k_{min},  k_{max}]$.

A similar argument holds for $p>k_{max}$, thereby concluding the proof. 
\end{proof}

Next, we investigate the local maxima of the convergence factor $\rho_{SR}$ with respect to $k$.

\begin{lemma}[Local maxima in $k$]\label{lemma3.14}
For any fixed $p\in [k_{min},k_{max}]$, the convergence factor $\rho_{SR}(k,p)$ attains its maximum exclusively at the endpoints $k_{min}$ and $k_{max}$, i.e.,
    \begin{equation}\nonumber
    \underset{k\in [k_{min},k_{max}]}{\max}\rho_{SR}(k,p)=\max \{\rho_{SR}(k_{min},p),\rho_{SR}(k_{max},p)\}.
    \end{equation}
\end{lemma}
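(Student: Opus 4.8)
The plan is to show that, for a fixed $p\in[k_{min},k_{max}]$, the map $k\mapsto\rho_{SR}(k,p)$ is strictly decreasing on $[k_{min},p]$ and strictly increasing on $[p,k_{max}]$; a function with this ``valley'' shape necessarily attains its maximum over $[k_{min},k_{max}]$ at one of the two endpoints, which is exactly the claim. The pivot is the point $k=p$: by Theorem~\ref{theorem3.12} we already know $\rho_{SR}\ge 0$ on the whole interval, and substituting $k=p$ into the definition gives $\rho_{SR}(p,p)=0$, so $k=p$ is the unique interior zero and a global minimum in $k$. It therefore suffices to control the sign of $\partial_k\rho_{SR}$ on each side of $p$.

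To do this I would differentiate $\rho_{SR}$ with respect to $k$ through the two $k$-dependent quantities $\xi_1(k)$ and $\xi_2(k)$ (treating $\xi_1(p),\xi_2(p)$ as constants), using Lemma~\ref{lemma3.1} to supply $\xi_i'(k)<0$. The key step is an algebraic simplification: writing $\rho_{SR}=N/D$ with $D=(\xi_1(p)+R_c+\xi_2(k))(\xi_2(p)+R_c+\xi_1(k))$, the partial derivatives with respect to $\xi_1(k)$ and $\xi_2(k)$ each collapse to a single product sharing the common factor $\xi_1(p)+\xi_2(p)+R_c$, because a combination of the form $(\xi_2(p)+R_c+\xi_1(k))+(\xi_1(p)-\xi_1(k))$ telescopes to $\xi_1(p)+\xi_2(p)+R_c$. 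After the chain rule this yields
\begin{equation}\nonumber
\partial_k\rho_{SR}=\frac{-(\xi_1(p)+\xi_2(p)+R_c)}{D^2}\,S,
\end{equation}
where
\begin{equation}\nonumber
S=(\xi_2(p)-\xi_2(k))(\xi_1(p)+R_c+\xi_2(k))\xi_1'(k)+(\xi_1(p)-\xi_1(k))(\xi_2(p)+R_c+\xi_1(k))\xi_2'(k).
\end{equation}

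Since $\xi_1(p)+\xi_2(p)+R_c>0$ and $D^2>0$, the sign of $\partial_k\rho_{SR}$ is opposite to that of $S$, and finishing the argument is a short sign count using Lemma~\ref{lemma3.1}. The factors $\xi_1(p)+R_c+\xi_2(k)$ and $\xi_2(p)+R_c+\xi_1(k)$ are positive and $\xi_1'(k),\xi_2'(k)<0$; for $k<p$ monotonicity forces $\xi_i(p)-\xi_i(k)<0$, so both summands of $S$ are positive, giving $S>0$ and $\partial_k\rho_{SR}<0$, while for $k>p$ the two differences flip sign, so $S<0$ and $\partial_k\rho_{SR}>0$. This establishes the decrease-then-increase behaviour and hence that the maximum equals $\max\{\rho_{SR}(k_{min},p),\rho_{SR}(k_{max},p)\}$. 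I expect the main obstacle to be not the sign bookkeeping but this intermediate simplification: verifying that the two partial derivatives really do factor through the common term $\xi_1(p)+\xi_2(p)+R_c$, since it is precisely this telescoping that renders the sign of $S$ transparent; without it one is left with an unwieldy rational expression whose sign cannot be read off directly.
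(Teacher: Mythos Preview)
Your proposal is correct and is essentially the same argument as the paper's: both compute $\partial_k\rho_{SR}$, extract the common factor $\xi_1(p)+\xi_2(p)+R_c$ (your $-S$ is exactly the paper's $g_k$), and then use Lemma~\ref{lemma3.1} to read off the sign on either side of $k=p$, giving the decrease--then--increase profile. The only cosmetic difference is that the paper treats the boundary cases $p=k_{min}$ and $p=k_{max}$ separately, whereas you absorb them into the same monotonicity argument.
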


\begin{proof}
Differentiating  $\rho_{SR}(k, p)$ with respect to $k$ gives
    \begin{equation}\nonumber
     \frac{\partial \rho_{SR}}{\partial{k}}=
     \frac{g_k(\xi_1(p)+\xi_2(p)+R_c)}{(\xi_1(p)+\xi_2(k)+R_c)^2(\xi_2(p)+\xi_1(k)+R_c)^2},
    \end{equation}
where
$g_k=\frac{d\xi_1(k)}{dk}(\xi_2(k)-\xi_2(p))(\xi_1(p)+\xi_2(k)+R_c)+\frac{d\xi_2(k)}{dk}(\xi_1(k)-\xi_1(p))(\xi_1(k)+\xi_2(p)+R_c)$. 
If $p = k_{min}$, then $ \partial_k \rho_{SR}>0$,  indicating that the convergence factor $\rho_{SR}$ is strictly increasing in $k$. 
Thus, the maximum occurs at $k=k_{max}$, giving us $\max \rho_{SR}=\rho_{SR}(k_{max},p)$. If $p=k_{max}$, a similar argument applies, showing that $\max \rho_{SR}=\rho_{SR}(k_{min},p)$.
If  $p\in(k_{min},k_{max})$, then $\partial_k \rho_{SR}<0$ for $k\in(k_{min},p)$ and $\partial_k \rho_{SR}>0$, for $k>p$. Consequently, $\rho_{SR}$ decreases in $(k_{min},p)$ and increases in $(p, k_{max})$. Therefore, the maximum must occur at one of the endpoints, $k=k_{min}$ or $k=k_{max}$. This concludes the proof. 
\end{proof} 

Now we are ready to solve the min-max problem \eqref{3.18}.

\begin{theorem}[Optimization of the scaled Robin condition]\label{theorem3.15} The min-max problem \eqref{3.18} is solved by the unique positive root $p^\ast$ of the nonlinear equation
    \begin{equation}\label{3.20}
    \rho_{SR}(k_{min},p^*)=\rho_{SR}(k_{max},p^*).
    \end{equation}
\end{theorem}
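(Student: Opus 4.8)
The plan is to reduce the two-variable min-max problem \eqref{3.18} to a one-dimensional balancing (equioscillation) argument, using the two preceding lemmas to eliminate the inner maximization. First I would invoke Lemma~\ref{lemma3.13} to restrict the search to $p\in[k_{min},k_{max}]$, and Lemma~\ref{lemma3.14} to replace the inner maximum over $k$ by the maximum of the two endpoint values, so that \eqref{3.18} becomes
\[
\min_{p\in[k_{min},k_{max}]}\ \max\{\rho_{SR}(k_{min},p),\,\rho_{SR}(k_{max},p)\}.
\]
It then suffices to understand the two single-variable functions $F(p):=\rho_{SR}(k_{min},p)$ and $G(p):=\rho_{SR}(k_{max},p)$ on $[k_{min},k_{max}]$.

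The second step is to pin down the monotonicity and endpoint behaviour of $F$ and $G$. Since $\rho_{SR}(k,k)=0$, I have $F(k_{min})=0$ and $G(k_{max})=0$, while positivity from Theorem~\ref{theorem3.12} gives $F(k_{max})>0$ and $G(k_{min})>0$. For the monotonicity I would reuse the derivative formula \eqref{3.19} for $\partial_p\rho_{SR}$ from the proof of Lemma~\ref{lemma3.13} and determine the sign of its numerator $g_p$ at the fixed frequencies $k=k_{min}$ and $k=k_{max}$. Using that each $\xi_i$ is strictly decreasing (Lemma~\ref{lemma3.1}), for $k=k_{min}$ and $p>k_{min}$ both differences $\xi_i(p)-\xi_i(k_{min})$ are negative and both $\xi_i'(p)$ are negative, forcing $g_p>0$, so $F$ is strictly increasing; the mirror-image sign count at $k=k_{max}$ with $p<k_{max}$ yields $g_p<0$, so $G$ is strictly decreasing.

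The third step is the balancing argument. The difference $F-G$ is continuous and strictly increasing (the sum of a strictly increasing function and the negative of a strictly decreasing one), with $F(k_{min})-G(k_{min})=-G(k_{min})<0$ and $F(k_{max})-G(k_{max})=F(k_{max})>0$; by the intermediate value theorem it has a unique root $p^\ast\in(k_{min},k_{max})$, which is precisely the solution of \eqref{3.20}, and strict monotonicity of $F-G$ gives uniqueness of this root in the relevant interval. Finally I would verify optimality by the crossing of a rising and a falling curve: for $p<p^\ast$ one has $F(p)<G(p)$, so the inner max equals the decreasing $G$, whereas for $p>p^\ast$ the inner max equals the increasing $F$; hence the objective $\max\{F,G\}$ decreases on $[k_{min},p^\ast]$ and increases on $[p^\ast,k_{max}]$, attaining its unique minimum exactly at $p^\ast$.

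I expect the main obstacle to be the sign determination of $g_p$ at the two endpoint frequencies, since this is the only place where genuine computation with the explicit structure of $\xi_i$ is required, and it is what turns the abstract equioscillation picture into a rigorous statement. Everything downstream—uniqueness of the root via strict monotonicity of $F-G$ and optimality via the monotone crossing of the two endpoint curves—is then a routine consequence.
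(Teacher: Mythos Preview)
Your proposal is correct and follows essentially the same route as the paper: reduce via Lemmas~\ref{lemma3.13} and~\ref{lemma3.14} to the two endpoint curves, use the derivative formula \eqref{3.19} to show $\rho_{SR}(k_{min},\cdot)$ is strictly increasing and $\rho_{SR}(k_{max},\cdot)$ is strictly decreasing on $[k_{min},k_{max}]$, and conclude by continuity that the unique crossing point solves the min-max problem. Your write-up is in fact somewhat more explicit than the paper's on the final optimality step (the monotone-crossing argument for $\max\{F,G\}$), but the underlying idea is identical.
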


\begin{proof}
According to Lemma \ref{lemma3.14}, the min-max problem \eqref{3.18} is equivalent to
\begin{equation}\nonumber
     \underset{p\in [k_{min},k_{max}]}{\min}
    {\max}\{\rho_{SR}(k_{min},p),\rho_{SR}(k_{min},p)\}.
    \end{equation}
From \eqref{3.19}, we have the following observations: (1) The derivative $\partial_p \rho_{SR}(k_{min},\linebreak[3]p)>0$ for any $p\in(k_{min},k_{max}]$.
This indicates that increasing $p\in[k_{min},k_{max}]$ strictly increases $\rho_{SR}(k_{min},p)$ from $\rho_{SR}(k_{min},k_{min})=0$ to $\rho_{SR}(k_{min},k_{max})>0$. 
(2) In contrast, $\partial_p \rho_{SR}(k_{max},p)<0$ for any $ p\in[k_{min},k_{max})$. Consequently, increasing $p$ from $k_{min}$ to $k_{max}$ strictly decreases $\rho_{SR}(k_{max},p)$ from $\rho_{SR}(k_{max},\linebreak[3]k_{min})>0$ to $\rho_{SR}(k_{max},k_{max})=0$.
By the continuity of the convergence factor, there exists a unique $p^{*}\in(k_{min},k_{max})$
that satisfies $\rho_{SR}(k_{min},p^*)=\rho_{SR}(k_{max},p^*)$ and thus solves the min-max problem \eqref{3.18}. 
\end{proof}

Due to the complex nature of the convergence factor, directly solving equation \eqref{3.20} for the optimized scaled Robin parameter $p^\ast$ when $c_{1,2}\neq0$ proves to be quite challenging. Fortunately, several asymptotic results can be derived, as demonstrated below.

\begin{theorem}[Scaled Robin asymptotics]\label{theorem3.16}
Let $R_c,\kappa_i>0$ and $c_i\ge0$ for $i=1,2$, and $k_{max}=\frac{\pi}{h}$. The solution $p^\ast$ to the min-max problem \eqref{3.18} has the following asymptotic expansion for small $h$
    \begin{equation}\label{3.21}
    p^\ast= C_p+O(h),
    \end{equation} 
where $C_p$ is a constant determined by the nonlinear equation 
    \begin{equation}\label{3.22}
    G_{k_{min}}(C_p)=G_{k_{max}}(C_p),
    \end{equation}
with $G_{k_{min}},G_{k_{max}}$ defined in \eqref{3.23}.
Furthermore, the convergence factor satisfies the following estimate
    \begin{equation}\label{convest} 
    \underset{k_{min}\le k \le k_{max}}{\max}\rho_{SR}(k,p^*)= G_{k_{max}}(C_p)+O(h),
    \end{equation}
where $G_{k_{max}}(C_p)<1$ is a positive constant  independent of the meshsize $h$.
\end{theorem}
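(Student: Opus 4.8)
The plan is to run a perturbation argument off the equioscillation characterization already proved in Theorem~\ref{theorem3.15}: $p^\ast$ is the unique root in $(k_{min},k_{max})$ of
\[
F(p,h):=\rho_{SR}(k_{min},p)-\rho_{SR}(k_{max},p),
\]
and I would track this root as $h\to0$, i.e. as $k_{max}=\pi/h\to\infty$. The crucial structural fact is that $k_{min}=\pi/|\Gamma|$ is \emph{fixed}, so $\rho_{SR}(k_{min},p)$ carries no $h$-dependence; I will write $G_{k_{min}}(p):=\rho_{SR}(k_{min},p)$. All the $h$-dependence sits in the second term. Using \eqref{f_k} and \eqref{characteristicroot} I would first record the expansion $\xi_i(\pi/h)=\tfrac{h}{\pi\kappa_i}+O(h^3)=O(h)$. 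Since $R_c>0$ keeps every denominator bounded below, this gives, uniformly for $p$ in any fixed compact set,
\[
\rho_{SR}(k_{max},p)=\frac{\big(\xi_1(p)-\xi_1(k_{max})\big)\big(\xi_2(p)-\xi_2(k_{max})\big)}{\big(\xi_1(p)+R_c+\xi_2(k_{max})\big)\big(\xi_2(p)+R_c+\xi_1(k_{max})\big)}=G_{k_{max}}(p)+O(h),
\]
with $G_{k_{max}}(p):=\frac{\xi_1(p)}{\xi_1(p)+R_c}\cdot\frac{\xi_2(p)}{\xi_2(p)+R_c}$.

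Next I would study the limiting equation $F_0(p):=G_{k_{min}}(p)-G_{k_{max}}(p)=0$ and show it has a unique positive root $C_p$. By Lemma~\ref{lemma3.1} the map $\xi_i$ is decreasing, and since $x\mapsto x/(x+R_c)$ is increasing, $G_{k_{max}}$ is strictly decreasing to $0$ as $p\to\infty$; the monotonicity already established in Theorem~\ref{theorem3.15} (the sign of $\partial_p\rho_{SR}(k_{min},p)$) shows $G_{k_{min}}$ is strictly increasing from $G_{k_{min}}(k_{min})=0$. Hence $F_0$ is strictly increasing with $F_0(k_{min})=-G_{k_{max}}(k_{min})<0$ and $F_0(p)>0$ for large $p$, so there is a unique $C_p$ solving \eqref{3.22}, and moreover $F_0'(C_p)>0$.

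I would then transfer this to the perturbed problem. Because $\xi_i(\pi/h)$ extends to a $C^1$ function of $h$ up to $h=0$ (value $0$, finite one-sided derivative), $F(p,h)$ is jointly $C^1$ near $(C_p,0)$ with $F(C_p,0)=F_0(C_p)=0$ and $\partial_pF(C_p,0)=F_0'(C_p)\neq0$. The implicit function theorem then produces a $C^1$ branch $p^\ast(h)$ with $p^\ast(0)=C_p$, which is precisely $p^\ast=C_p+O(h)$, establishing \eqref{3.21}; uniqueness of the min-max solution (Theorem~\ref{theorem3.15}) identifies this branch with the optimizer. For the factor estimate \eqref{convest}, Lemma~\ref{lemma3.14} and the equioscillation identity \eqref{3.20} give $\max_k\rho_{SR}(k,p^\ast)=\rho_{SR}(k_{max},p^\ast)$; inserting $\rho_{SR}(k_{max},p^\ast)=G_{k_{max}}(p^\ast)+O(h)$ and $p^\ast=C_p+O(h)$, and using that $G_{k_{max}}$ is Lipschitz near $C_p$, yields $G_{k_{max}}(C_p)+O(h)$, with $0<G_{k_{max}}(C_p)<1$ immediate from $R_c>0$ and $\xi_i>0$ (each factor lies strictly in $(0,1)$).

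I expect the main obstacle to be the \emph{uniform} control of the remainder: upgrading the pointwise limit $\rho_{SR}(k_{max},p)\to G_{k_{max}}(p)$ to an $O(h)$ bound that holds uniformly on a neighborhood of $C_p$ and survives differentiation in $p$, which is exactly what the implicit function theorem needs to deliver the stated $O(h)$ rate. This rests on the denominators remaining bounded away from zero (guaranteed by $R_c>0$) and on the smooth, genuinely $O(h)$ dependence of $\xi_i(\pi/h)$ on $h$; once these are in hand, the rest is the routine monotonicity and Taylor bookkeeping sketched above.
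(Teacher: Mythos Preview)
Your proposal is correct and reaches the same conclusion as the paper, but by a genuinely different route. The paper's proof is a short formal derivation: it posits the ansatz $p^\ast=C_p+D_ph^{\alpha}$, substitutes into the equioscillation identity \eqref{3.20}, Taylor-expands each side in $h$, and reads off \eqref{3.22} by matching leading terms; the convergence-factor estimate is then quoted from Theorem~\ref{theorem3.15}. You instead recast \eqref{3.20} as $F(p,h)=0$, identify the $h\to0$ limit $F_0=G_{k_{min}}-G_{k_{max}}$, prove $F_0$ has a unique simple root $C_p$ via the monotonicity of $\xi_i$ and of $x\mapsto x/(x+R_c)$, and apply the implicit function theorem. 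Your $G_{k_{min}}(p)=\rho_{SR}(k_{min},p)$ and $G_{k_{max}}(p)=\xi_1(p)\xi_2(p)/[(\xi_1(p)+R_c)(\xi_2(p)+R_c)]$ are exactly the paper's expressions \eqref{3.23} after the substitution $\xi_i=1/(\kappa_i\hat\varphi_i)$. What your approach buys is rigor: the ansatz method verifies consistency but does not by itself justify the $O(h)$ remainder, whereas the implicit function theorem delivers both existence of the branch and the rate, once you check (as you note) that $\xi_i(\pi/h)$ is $C^1$ in $h$ up to $h=0$ and the denominators stay bounded away from zero thanks to $R_c>0$. The paper's approach buys brevity and is standard in the OSM literature. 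One small point worth making explicit in your write-up: the identification of the IFT branch with the actual optimizer uses that $C_p>k_{min}$ is finite while $k_{max}=\pi/h\to\infty$, so for small $h$ the IFT root lies in $(k_{min},k_{max})$ and hence, by the uniqueness in Theorem~\ref{theorem3.15}, must equal $p^\ast$.
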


\begin{proof}
Guided by numerical investigations, we make the ansatz $p^{*}=C_p+D_ph^{\alpha},\alpha>0$ and plug it into  \eqref{3.20}. 
Then, expanding the convergence factor for $h$ small gives
    \begin{equation}\nonumber
      \rho_{SR}(k_{min},p^{*})=G_{k_{min}}(C_p)+O(h),\quad
      \rho_{SR}(k_{max},p^{*})= G_{k_{max}}(C_p)+O(h),
    \end{equation} 
where $G_{k_{min}}(C_p),G_{k_{max}}(C_p)$ read
    \begin{equation}\label{3.23}
    \begin{aligned}
      &G_{k_{min}}(C_p)=\frac{\kappa_1\kappa_2(\hat{\varphi}_1(k_{min})-\hat{\varphi}_1(C_p))(\hat{\varphi}_2(k_{min})-\hat{\varphi}_2(C_p))}{\hat{\varphi}_3\hat{\varphi}_4},\\
      &G_{k_{max}}(C_p)=\frac{1}{(R_c\kappa_1\hat{\varphi}_1(C_p)+1)(R_c\kappa_2\hat{\varphi}_2(C_p)+1)},
      \end{aligned}
    \end{equation}
with
$\hat{\varphi}_{1}(x)=\sqrt{x^2+\eta_1}$, $\hat{\varphi}_{2}(x)=\sqrt{x^2+\eta_2}$,
$\hat{\varphi}_3=\kappa_1(R_c\kappa_2\hat{\varphi}_2(C_p)+1)\hat{\varphi}_1(k_{min})+\kappa_2\hat{\varphi}_2(C_p)$, 
$\hat{\varphi}_4=\kappa_2(R_c\kappa_1\hat{\varphi}_1(C_p)+1)\hat{\varphi}_2(k_{min})+\kappa_1\hat{\varphi}_1(C_p)$.
Setting the leading terms equal leads to \eqref{3.22}, where  the root $C_p$ is a constant independent of $h$ since both $G_{k_{min}}(C_p)$ and $G_{k_{max}}(C_p)$ are $h$ independent. 
Additionally, the estimate for the convergence factor in \eqref{convest} follows from Theorem \ref{theorem3.15}, and the second equation in \eqref{3.23} confirms that $G_{k_{max}}(C_p)<1$.  
\end{proof}

The implicit nature of the optimized parameter $p^\ast$ defined in \eqref{3.21} and the estimate $G_{k_{max}}(C_p)$ say nothing about how the physical parameters influence the performance of the OSM algorithm. To make it more clear, we simplify our model problem to the case where $c_i=0, i=1,2$, which allows for the derivation of several detailed results. 

\begin{theorem}[Scaled Robin asymptotics for $c_i=0$]\label{theorem4.6}
Assume $R_c,\kappa_i>0$ and $k_{max}=\frac{\pi}{h}$. For the special case $c_i=0,i=1,2$, the solution $p^\ast$ to the min-max problem \eqref{3.18} is given in closed form by 
\begin{equation}\label{p_01}
 p^\ast= \frac{\pi R_c\kappa_1\kappa_2k_{min}+\sqrt{\pi}\sqrt{((\pi R_c\kappa_1+h)\kappa_2+h\kappa_1)k_{min}((R_c\kappa_1k_{min}+1)\kappa_2+\kappa_1)}}{(R_c(hk_{min}+\pi)\kappa_2+h)\kappa_1+h\kappa_2},
\end{equation}
which has the following asymptotic expansion for small $h$
    \begin{equation}\label{p_0} 
    p^\ast=\hat{C}_p+O(h),\quad \hat{C}_p= k_{min}+\sqrt{k_{min}^2+\frac{1}{R_c\kappa_1}+\frac{1}{R_c\kappa_2}}.
    \end{equation}
    The corresponding convergence factor estimate reads
    \begin{equation}\label{e4.11}
\underset{k_{min}\le k \le \frac{\pi}{h}}{\max}|\rho_{SR}(k,p^*)| = \frac{1}{(R_c\kappa_1\hat{C}_p+1)(R_c\kappa_2\hat{C}_p+1)}+O(h).
    \end{equation} 
\end{theorem}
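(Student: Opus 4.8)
The plan is to specialize the general scaled-Robin machinery to the degenerate reaction case $c_i=0$, where every quantity becomes explicit, and then carry the resulting algebra to a closed form. First I would evaluate the symbols: setting $c_i=0$ in \eqref{characteristicroot} collapses the characteristic root to $\lambda_i(k)=k$, so that by \eqref{f_k} we get $\xi_i(k)=1/(\kappa_i k)$. Substituting these into $\rho_{SR}$ and clearing the common factor $1/(\kappa_i p k)$ from each of the two fractions, the convergence factor reduces to the rational function
\begin{equation*}
\rho_{SR}(k,p)=\frac{\kappa_1\kappa_2\,(k-p)^2}{P(k,p)\,Q(k,p)},
\end{equation*}
where $P(k,p)=\kappa_2 k+\kappa_1 p+R_c\kappa_1\kappa_2 pk$ and $Q(k,p)=\kappa_1 k+\kappa_2 p+R_c\kappa_1\kappa_2 pk$ are both affine in $p$. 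This representation is what makes the min--max problem analytically tractable.

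Next I would invoke Theorem \ref{theorem3.15}: the optimal parameter $p^\ast$ is the unique root in $(k_{min},k_{max})$ of the equioscillation equation $\rho_{SR}(k_{min},p)=\rho_{SR}(k_{max},p)$. Writing $\Pi_j(p):=P(k_j,p)Q(k_j,p)$ for $j\in\{\min,\max\}$ and cross-multiplying turns this into the polynomial identity $(k_{min}-p)^2\,\Pi_{\max}(p)=(k_{max}-p)^2\,\Pi_{\min}(p)$, which is a quartic in $p$. The key structural fact is that this quartic is reducible over the field of parameters: one verifies directly that $p=0$ is always a root (both sides equal $\kappa_1\kappa_2 k_{min}^2 k_{max}^2$ there), and dividing it out leaves a cubic that further splits into a linear factor whose root falls outside $[k_{min},k_{max}]$ and a single quadratic factor. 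The positive root of that quadratic, after inserting $k_{max}=\pi/h$, is precisely \eqref{p_01}. That this—rather than $p=0$, the spurious linear root, or the conjugate root of the quadratic—is the genuine optimizer follows because Lemma \ref{lemma3.13} confines $p^\ast$ to $[k_{min},k_{max}]$ while Theorem \ref{theorem3.15} asserts uniqueness in that interval.

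With the closed form established, the two asymptotic claims are routine. For \eqref{p_0} I would Taylor-expand \eqref{p_01} about $h=0$: the denominator tends to $\pi R_c\kappa_1\kappa_2$, the radicand tends to $\pi^2 R_c\kappa_1\kappa_2 k_{min}\big((R_c\kappa_1 k_{min}+1)\kappa_2+\kappa_1\big)$, and collecting the leading terms produces the constant $\hat C_p$ of \eqref{p_0}, with the discarded contributions supplying the $O(h)$ remainder. Equivalently, $\hat C_p$ may be obtained as the positive root of the equation $G_{k_{min}}(C)=G_{k_{max}}(C)$ from Theorem \ref{theorem3.16}, which for $c_i=0$ has $\hat\varphi_i(x)=x$ and hence becomes an explicitly solvable quadratic. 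For the convergence estimate \eqref{e4.11} I would specialize the general estimate \eqref{convest} of Theorem \ref{theorem3.16} to $c_i=0$: then $\hat\varphi_i(C_p)=\hat C_p$ in \eqref{3.23}, so $G_{k_{max}}(C_p)=\big[(R_c\kappa_1\hat C_p+1)(R_c\kappa_2\hat C_p+1)\big]^{-1}$, exactly the right-hand side of \eqref{e4.11}; alternatively I would evaluate $\rho_{SR}(k_{max},p^\ast)$ directly and use that $\xi_i(k_{max})=1/(\kappa_i k_{max})\to 0$ as $h\to 0$ while $p^\ast\to\hat C_p$.

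The main obstacle is the middle step, namely showing that the quartic equioscillation identity collapses onto a single quadratic and correctly isolating which of the four roots is $p^\ast$. The bookkeeping is heavy—four affine factors multiplied against $(k-p)^2$ with $k_{max}=\pi/h$—and the elimination of the spurious $p=0$ root and the extraneous linear root must be executed carefully so that the surviving quadratic coincides with \eqref{p_01}. The monotonicity statements of Lemmas \ref{lemma3.1} and \ref{lemma3.14}, together with the confinement of Lemma \ref{lemma3.13}, are precisely what allow these unwanted roots to be discarded unambiguously.
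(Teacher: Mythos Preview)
Your proposal is correct and follows essentially the same route as the paper: solve the equioscillation equation \eqref{3.20} explicitly when $c_i=0$, Taylor-expand the resulting closed form for \eqref{p_0}, and invoke Theorem~\ref{theorem3.16} for \eqref{e4.11}. The paper's proof is terse and simply asserts that the explicit solution \eqref{p_01} can be derived; you have correctly identified the mechanism, including the factorization of the quartic equioscillation polynomial as $p\cdot(R_c\kappa_1\kappa_2 p+\kappa_1+\kappa_2)\cdot(\text{quadratic})$ and the role of Lemma~\ref{lemma3.13} and Theorem~\ref{theorem3.15} in discarding the spurious roots.
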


\begin{proof}
For the special case $c_i=0,i=1,2$ the solution $p^\ast$ to the equi-oscillation problem \eqref{3.20}  can be explicitly derived, yielding equation \eqref{p_01}. The result in equation \eqref{p_0} follows from a Taylor expansion, while equation \eqref{e4.11} is a consequence of Theorem \ref{theorem3.16}.
\end{proof}

\begin{remark}
The explicit expression of $p^*$ in \eqref{p_0} offers a robust initial value for determining the optimized parameter $p^\ast$ for the case $c_i>0, i=1,2$ from \eqref{3.20}.
\end{remark}

\begin{remark}\label{remark4.8}
In contrast to the findings presented in \cite{gander2015optimized} and \cite{gander2019heterogeneous}, introducing TCR $R_c$ brings a variety of new properties into the OSM algorithm. 

First of all, when addressing the heat conduction in composites \eqref{2.1}-\eqref{2.2} inclusive of TCR $R_c$, the scaled Robin condition facilitates mesh-independent convergence behavior, as illustrated in equation \eqref{e4.11}. As a comparison, the convergence is mesh-dependent when no TCR is under consideration, as shown in \cite{gander2015optimized} and \cite{gander2019heterogeneous}.

We now look at the heterogeneity contrast defined by
$\lambda=\frac{\kappa_1}{\kappa_2}$.
By symmetry, we restrict our analysis to the case where $\kappa_1\geq\kappa_2$, i.e., $\lambda\geq 1$. Reformulating the estimate from equation \eqref{e4.11} yields
\begin{equation}\nonumber
    \underset{k_{min}\le k \le \frac{\pi}{h}}{\max}|\rho_{SR}(k,p^*)| = \frac{1}{\lambda T(\lambda)}\cdot\frac{1}{T(\lambda)+1} +O(h),
    \end{equation}
where 
$  
T(\lambda)=R_c\kappa_2 k_{min}+\sqrt{R_c \kappa_2 k_{min}(R_c\kappa_2 k_{min}+1+1/\lambda)}.
$
A further asymptotic expansion for $\lambda$ large yields
\begin{equation}\nonumber
  \begin{aligned}
   \underset{k_{min}\le k \le \frac{\pi}{h}}{\max}|\rho_{SR}(k,p^*)|= \frac{1}{T_\infty(T_\infty+1)}\cdot\frac{1}{\lambda}+O(\frac{1}{\lambda^2})+O(h),
    \end{aligned}
    \end{equation}
where $T_\infty=\lim_{\lambda\rightarrow \infty} T(\lambda)=R_c\kappa_2k_{min}+\sqrt{R_c\kappa_2k_{min}(R_c\kappa_2k_{min}+1)}$.
Consequently, similar to the conclusions drawn in \cite{gander2015optimized} and \cite{gander2019heterogeneous}, it is evident that an increased heterogeneity contrast correlates with expedited convergence of the OSM algorithm. 
However, unlike scenarios devoid of TCR, the estimate for the convergence factor is influenced not only by the heterogeneity contrast but also by the thermal conductivity. 
It can be concluded that, for fixed $\lambda$, a larger thermal conductivity would lead to faster convergence of the OSMs. To sum up, for fixed $\kappa_2$, the heterogeneity contrast can improve the convergence, while for fixed heterogeneity contrast $\lambda$, the thermal conductivity also enhances convergence. 
\end{remark}

\begin{remark}\label{scaled_rc}
A significant TCR $R_c$ facilitates a faster convergence of the OSM algorithm, as initially indicated by the convergence factor. 
To elucidate this result more clearly, we examine the following two cases:
\begin{itemize}
    \item Asymptotically expanding $\max_k|\rho_{SR}(k,p^*)|$ in $R_c$ for $R_c$ large gives
    \begin{equation}\nonumber
    \underset{k_{min}\le k \le \frac{\pi}{h}}{\max}|\rho_{SR}(k,p^*)| \sim \frac{1}{4\kappa_1\kappa_2k_{min}^2R_c^2}+O(\frac{1}{R_c^3}),
    \end{equation}
    which shows that large TCR accelerates the convergence quadratically. 
    \item Taylor expanding $\max_k|\rho_{SR}(k,p^*)|$ in $R_c$ gives
    \begin{equation}\nonumber 
    \begin{aligned}
   \underset{k_{min}\le k \le \frac{\pi}{h}}{\max}|\rho_{SR}(k,p^*)| \sim 1-(\kappa_1+\kappa_2)\sqrt{\frac{k_{min}}{\kappa_2}}R_c^{\frac{1}{2}}+O(R_c),
    \end{aligned}
    \end{equation}
    which clearly shows that it converges to the case without TCR at a rate of the square root of $R_c$.   
\end{itemize} 
Furthermore, the results above also illustrate the role of the thermal conductivity $\kappa_i$ in enhancing convergence.
\end{remark}

To well compare the results with those without TCR, we present below the asymptotic results for $R_c = 0$, which are similar to the results reported in \cite{gander2019heterogeneous} for $c_1>0, c_2=0$. 

\begin{theorem}[Scaled Robin asymptotics for $R_c=0$]\label{theorem3.18}
Assume  $\kappa_i>0,c_i\ge0,i=1,2$, and $k_{max}=\frac{\pi}{h}$. 
For the case neglecting TCR, i.e., $R_c=0$, the optimized scaled Robin parameter  $p_0^*$ and the corresponding convergence rate estimate satisfy for $h$ small
   \begin{equation}\label{p_rc0}
   \begin{aligned}
   p_0^*&\sim\frac{\sqrt{\pi(\lambda+1)(\lambda \hat{\varphi}_1(k_{min})+\hat{\varphi}_2(k_{min}))}}{\lambda+1}h^{-\frac{1}{2}},\\
    \underset{k_{min}\le k \le \frac{\pi}{h}}{\max}|\rho_{SR0}(k,p_0^*)| &\sim 1-\frac{(\lambda+1)^{\frac{3}{2}}\sqrt{(\lambda \hat{\varphi}_1(k_{min}))+\hat{\varphi}_2(k_{min}))}}{\sqrt{\pi}\lambda}h^{\frac{1}{2}}.
    \end{aligned}
    \end{equation}
\end{theorem}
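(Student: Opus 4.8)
The plan is to start from the equi-oscillation characterization of Theorem~\ref{theorem3.15}, specialized to $R_c=0$ and $k_{max}=\pi/h$, and to pin down the leading behavior of $p_0^\ast$ by matching asymptotics of the convergence factor $\rho_{SR0}$ (that is, $\rho_{SR}$ with $R_c=0$) at the two endpoints $k_{min}$ and $k_{max}$. The observation motivating the whole analysis is that the mesh-independent ansatz $p^\ast=C_p+O(h)$ of Theorem~\ref{theorem3.16} must break down here: setting $R_c=0$ in the expression for $G_{k_{max}}(C_p)$ in \eqref{3.23} gives $G_{k_{max}}=1$, signalling that under that scaling the endpoint $k_{max}$ is no longer a contraction. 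I therefore expect $p_0^\ast\to\infty$ as $h\to0$ and adopt the classical diffusive ansatz $p_0^\ast=Ch^{-1/2}(1+o(1))$, with $C=C(\lambda,\kappa_i,c_i,k_{min})>0$ to be fixed by the matching.

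Recalling that $\xi_i(k)=1/(\kappa_i\hat{\varphi}_i(k))$ with $\hat{\varphi}_i(k)=\sqrt{k^2+c_i/\kappa_i}$, I would expand the three relevant quantities for $h$ small (writing $\hat{\varphi}_i$ for $\hat{\varphi}_i(k_{min})$): the low-frequency value $\xi_i(k_{min})=1/(\kappa_i\hat{\varphi}_i)$ stays $O(1)$, while $\xi_i(k_{max})=\tfrac{h}{\pi\kappa_i}+O(h^3)$ and $\xi_i(p_0^\ast)=\tfrac{h^{1/2}}{\kappa_i C}+o(h^{1/2})$ both vanish. Since at $R_c=0$ the two factors of $\rho_{SR0}$ share the same sign by the monotonicity of $\xi_i$ (Lemma~\ref{lemma3.1}), we have $|\rho_{SR0}|=\rho_{SR0}$, so it suffices to track $\rho_{SR0}$ itself. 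Plugging the expansions in and keeping the leading correction, both endpoint values approach $1$ at rate $h^{1/2}$:
\begin{equation}\nonumber
\begin{aligned}
\rho_{SR0}(k_{max},p_0^\ast)&=1-\frac{(\lambda+1)^2}{\pi\lambda}\,C\,h^{1/2}+o(h^{1/2}),\\
\rho_{SR0}(k_{min},p_0^\ast)&=1-\frac{(\lambda+1)(\lambda\hat{\varphi}_1+\hat{\varphi}_2)}{\lambda\,C}\,h^{1/2}+o(h^{1/2}).
\end{aligned}
\end{equation}

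Equating the two leading corrections — this is the equi-oscillation \eqref{3.20} at leading order — reduces the balance to the single scalar equation $\tfrac{(\lambda+1)^2}{\pi}C=\tfrac{(\lambda+1)(\lambda\hat{\varphi}_1+\hat{\varphi}_2)}{C}$, whose positive root is $C=\sqrt{\pi(\lambda\hat{\varphi}_1+\hat{\varphi}_2)/(\lambda+1)}$. Substituting $p_0^\ast=Ch^{-1/2}$ then gives the first relation in \eqref{p_rc0}, and inserting $C$ into the common endpoint value yields the second (the convergence-rate estimate). Before matching I would confirm admissibility, i.e.\ that $p_0^\ast\in(k_{min},k_{max})$ for $h$ small, which holds since $Ch^{-1/2}\ll\pi h^{-1}=k_{max}$; this validates the use of Lemma~\ref{lemma3.14} (the maximum over $k$ is attained at the endpoints) and hence of Theorem~\ref{theorem3.15}.

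The main obstacle is pinning down the scaling exponent and showing it is forced. With a trial ansatz $p_0^\ast\sim Ch^{-\beta}$, $\beta>0$, the $k_{max}$-correction is of order $h^{1-\beta}$ with a coefficient growing in $C$, whereas the $k_{min}$-correction is of order $h^{\beta}$ with a coefficient decaying in $C$. Equi-oscillation can hold only if the two corrections share the same order, forcing $1-\beta=\beta$, i.e.\ $\beta=\tfrac12$; only then does balancing the coefficients produce a finite positive $C$. The delicate technical point is controlling the $o(h^{1/2})$ remainders uniformly in $k$, so that the endpoints genuinely dominate and the equi-oscillation argument is rigorous; the remaining steps are routine Taylor expansions. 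As a consistency check, setting $c_i=0$ reduces $\hat{\varphi}_i(k_{min})$ to $k_{min}$ and should recover the $R_c\to0$ limit of the closed-form and asymptotic results of Theorem~\ref{theorem4.6}.
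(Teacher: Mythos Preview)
Your proposal is correct and follows essentially the same route as the paper: specialize the convergence factor to $R_c=0$, invoke equi-oscillation at $k_{min}$ and $k_{max}$, make the ansatz $p_0^\ast\sim Ch^{-\alpha}$, Taylor-expand both endpoint values, and match the leading corrections to determine first $\alpha=\tfrac12$ and then $C$. Your version is in fact more careful than the paper's (you explain why the mesh-independent ansatz of Theorem~\ref{theorem3.16} fails, you show the exponent $\tfrac12$ is forced by the balance $1-\beta=\beta$, and you add the admissibility and $c_i=0$ consistency checks), but the argument is the same.
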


\begin{proof}
Firstly, the convergence factor for $R_c=0$ reads
     \begin{equation}\nonumber 
     \rho_{SR0}(p,k)=\frac{(\sqrt{p^2+\eta_1}-\sqrt{k^2+\eta_1})(\sqrt{p^2+\eta_2}-\sqrt{k^2+\eta_2})\kappa_1\kappa_2}{(\kappa_1\sqrt{p^2+\eta_1}+\kappa_2\sqrt{k^2+\eta_2})(\kappa_1\sqrt{k^2+\eta_1}+\kappa_2\sqrt{p^2+\eta_2})}.
     \end{equation}
An analysis analogous to that used in \cite{gander2019heterogeneous} indicates that the min-max problem $$\underset{p\in R}{\min}\underset{k\in[k_{min},k_{max}]}{\max}\rho_{SR0}(p,k)$$ is solved by  the equioscillation at endpoints $k_{min}$ and $k_{max}$. 
Making the ansatz $p_0^*=C_{r}h^{-\alpha_1},\alpha_1>0$ and Taylor expanding the convergence factor at $h = 0$, we get
    \begin{equation}\nonumber
    \begin{aligned}
    &|\rho_{SR0}(k_{min},p_0^*)|\sim1-\frac{(\lambda+1)(\lambda \hat{\varphi}_1(k_{min})+\hat{\varphi}_2(k_{min}))}{\lambda C_{r}}h^{\alpha_1},\\
    &|\rho_{SR0}(k_{max},p_0^*)|\sim1-\frac{C_{r}(\lambda+1)^2}{\pi \lambda}h^{1-\alpha_1}.
    \end{aligned}
    \end{equation}
Identifying the previous two expansions leads to the desired result.
\end{proof}

Note that this result degenerates to the case in \cite{gander2019heterogeneous} if setting $\eta_2=\frac{c_2}{\kappa_2}=0$, to the case in \cite{gander2015optimized} if setting $\eta_i=\frac{c_i}{\kappa_i}=0,i=1,2$.

\begin{remark}[Comparison of the convergence factors]
We compare the convergence factor for the scaled Robin condition with the standard Robin. Since lack of the optimization result for the standard Robin condition, the Matlab command {\textbf{fminsearch}} is used to numerically get an optimized result, while for the scaled Robin we directly use the result indicated in Theorem \ref{theorem4.6}. 
In Fig. \ref{fig4.1}, we plot both optimized convergence factors as functions of the frequency $k$ for the case $c_i=0,i=1,2$ and a fixed thermal conductivity $\kappa_2=0.01$.
Firstly, it is evident that the two convergence factors exhibit distinct behaviors. 
The scaled Robin condition leads to a nonnegative convergence factor equi-oscillating at $k_{min}$ and $k_{max}$. 
In contrast, the convergence factor for the standard Robin condition can take negative values, forming a local interior maximum point and eventually leading to more complex equi-oscillation regime and complicating the theoretical analysis, this is the reason why the standard Robin is hard to optimize.
Secondly, the scaled Robin condition generally exhibits a smaller convergence factor compared to the standard Robin condition, particularly when there is a significant heterogeneity contrast or a larger thermal conductivity
$\kappa_1$. 
Thirdly, when the TCR 
$R_c$ is close to zero, the maxima of both convergence factors approach to $1$, indicating slow convergence. 
In contrast, the introduction of TCR results both convergence factors in relatively smaller maxima, which indicates improved convergence speeds.
Fourthly, we find that when the TCR is present, both convergence factors do not change over high frequencies, meaning that both Robin conditions lead to mesh-independent convergence behavior. 
Finally, by comparing the two plots, we observe that for both Robin conditions, a larger 
$\kappa_1$ 
leads to smaller convergence factors, confirming that both large heterogeneity contrast and large conductivity would accelerate the convergence.
The above findings not only confirm the validity of our theoretical results for the scaled Robin condition, but also demonstrate that the results are also applicable to the standard Robin condition, which is not recommended when there is no TCR since large heterogeneity contrast deteriorates its convergence \cite{gander2015optimized,gander2019heterogeneous}.
   \begin{figure}[htbp]
    \centering
    \subfigure{\includegraphics[width=6cm]{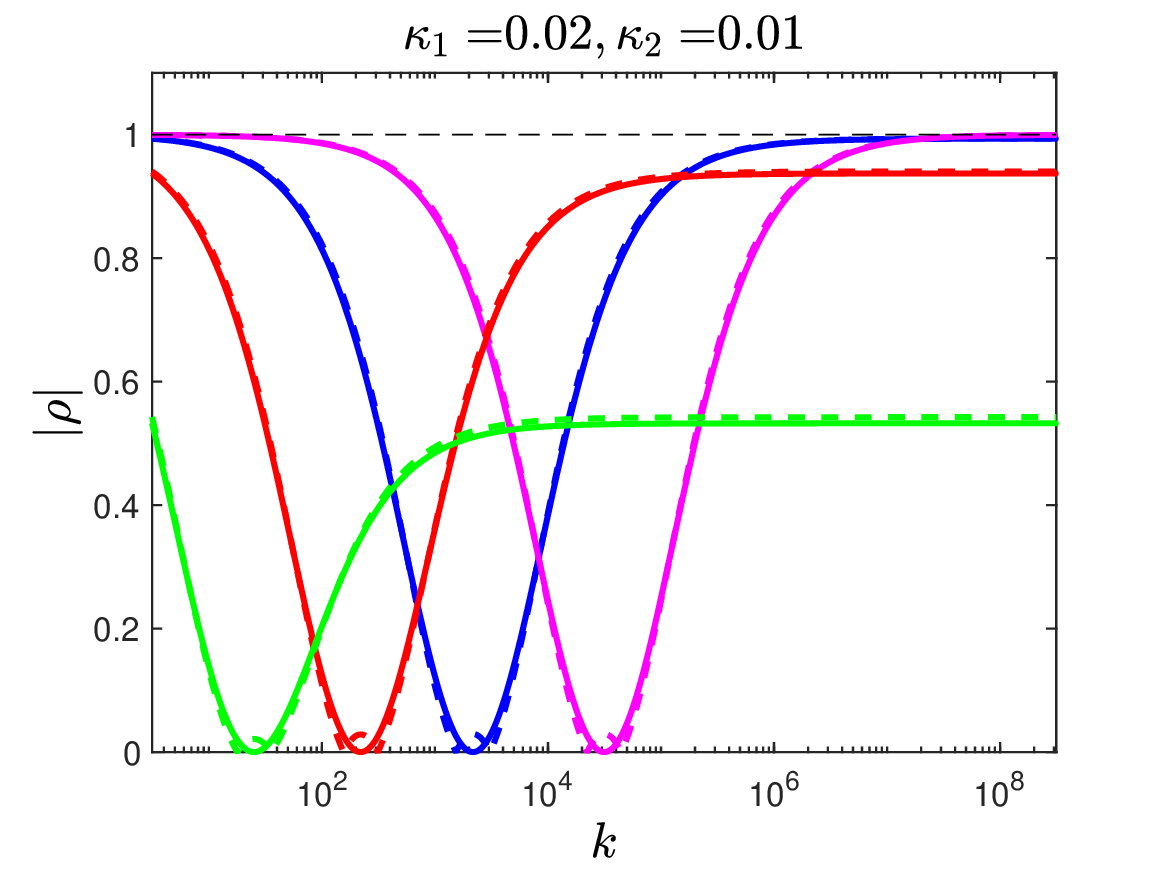}}
    \quad
    \subfigure{\includegraphics[width=6cm]{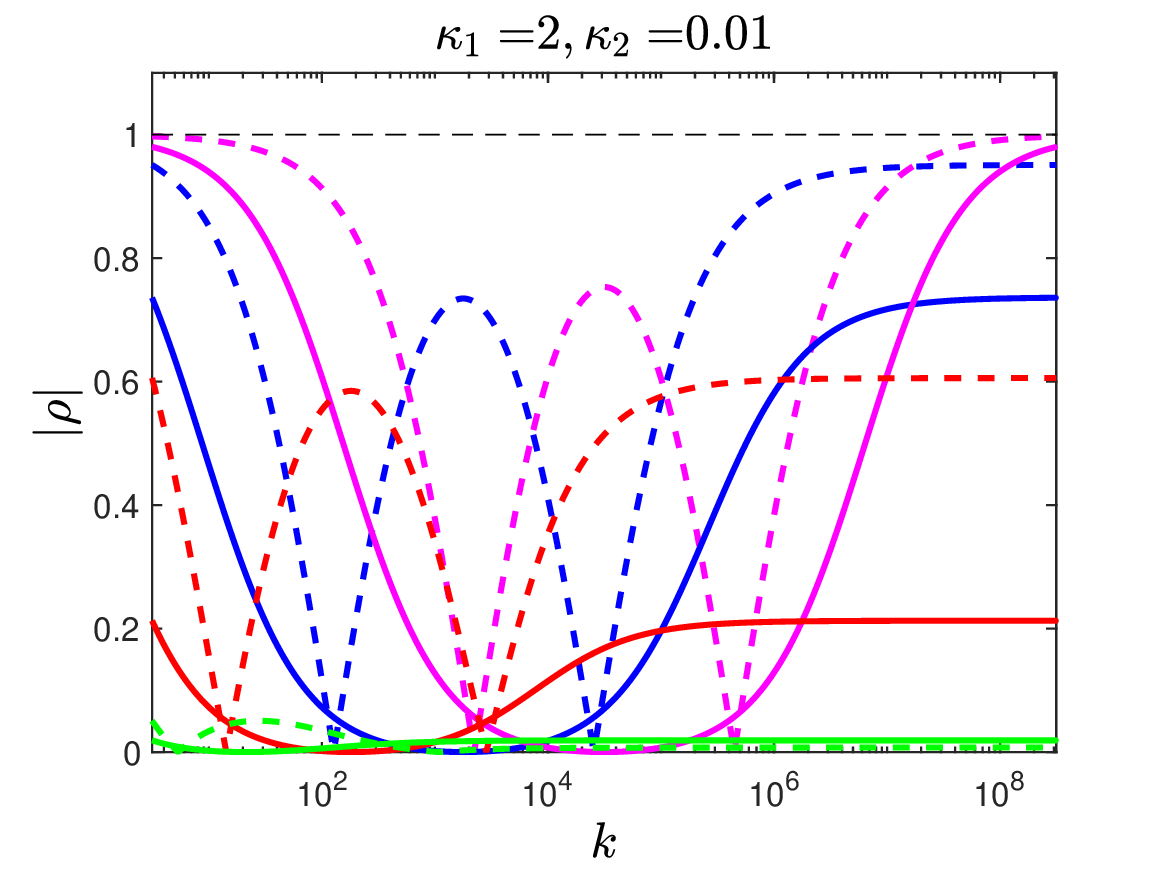}}
    \caption{Optimized convergence factors of the standard Robin algorithms (dotted line) and the scaled Robin algorithms (solid line) for $R_c=0$ (magenta line), $R_c=10^{-4}$ (blue line), $R_c=10^{-2}$ (red line), and $R_c=1$ (green line), meshsize $h=10^{-8}$.}\label{fig4.1}
    \end{figure}
 \end{remark}

\section{Numerical experiments}
To effectively illustrate our theoretical findings, we directly solve the error equation, specifically, the heat conduction problem \eqref{2.1}-\eqref{2.2} with zero source $f=0$. The domain  $\Omega=(0,1)^2$ consists of two subdomains 
$\Omega_1=(-1,0)\times(0,1)$ and $\Omega_2=(0,1)\times(0,1)$, and the interface $\Gamma =\{0\}\times (0,1)$.
The problem is solved using the OSM algorithm \eqref{2.4}-\eqref{2.7}, where the sub-problems are discretized employing the standard centered finite differences with a uniform grid of size $h$. 
To start the subdomain iteration, random initial guesses of $h_{1,2}^0$ on the interface $\Gamma$ are applied to allow the presence of all Fourier frequencies. The iteration terminates when the maximum errors are reduced to a given tolerance $tol$ 
\begin{equation}\nonumber
\max\{\|T_1^n\|_{\infty},\|T_2^n\|_{\infty}\}<tol,
\end{equation}
where $T_i^n$ are the numerical solutions at $n$-th iteration, representing the error since the exact solution is zero. 
Note that for comparison we also show the results of standard Robin condition, whose optimal parameters are obtained numerically by the Nelder-Mead algorithm (the Matlab command {\textbf{fminsearch}}).

\subsection{Effect of the presence of TCR.}\label{subsection5.1}
Our analysis demonstrates that the presence of $R_c$ introduces several distinct properties compared to the scenario without TCR ( $R_c=0$). 
We here illustrate the novel findings resulting from the presence of TCR ($R_c>0$).
\subsubsection{Mesh-independent convergence rates.}
To illustrate the mesh-independent convergence rate estimate in Theorem \ref{theorem3.16}, we run for various meshsizes $h$ the OSM algorithm \eqref{2.4}-\eqref{2.7} until the infinite error reaches  $tol=10^{-6}$ and collect the corresponding results in Table \ref{table_iter}. It shows that the number of iterations required for each run  remains constant regardless of the meshsize. 
That is to say, both Robin transmission conditions exhibit mesh-independent convergence performance. In addition, it is clear that the scaled Robin outperforms the standard Robin. 
\begin{table}[htbp]
\caption{Number of iterations required by the OSM algorithm \eqref{2.4}-\eqref{2.7} to reach the tolerance of $10^{-6}$. Physical parameters: $\kappa_1=2,\kappa_2=0.01,R_c=0.01$.}\label{table_iter}
\begin{center}
\vspace{5pt} 
\renewcommand\arraystretch{1.2}
\begin{tabular}{|c|c|c|c|c|c|}
\hline
\multirow{2}{*}{Trans. Cond.}&\multicolumn{5}{c|}{$h$} \\
\cline{2-6} 
 & $\frac{1}{32}$ & $\frac{1}{64}$&    $\frac{1}{128}$&$\frac{1}{256}$ &$\frac{1}{512}$\\
\hline
standard Robin & $21$ &$21$  &$21$&$21$&$21$  \\
\hline
scaled Robin& $9$ &$9$  &$9$&$9$&$9$\\
 \hline
\end{tabular}
\end{center}
\end{table}
\subsubsection{Effect of heterogeneity contrast.}
To illustrate the effect of heterogeneity contrast as discussed in Remark \ref{remark4.8}, we vary the heterogeneity contrast $\lambda=\frac{\kappa_1}{\kappa_2}$ by adjusting $\kappa_1$ while keeping $\kappa_2$ fixed, and collect in Table \ref{table_convergence} the number of iterations required by our algorithm \eqref{2.4}-\eqref{2.7} to reach a tolerance of $10^{-6}$. The results presented in Table \ref{table_convergence} indicate that, for both Robin conditions, an increased heterogeneity contrast correlates with faster convergence, consistent with the analyses in Remark \ref{remark4.8}. 
Notably, the scaled Robin condition demonstrates performance akin to the case without TCR ($R_c=0$), as described in \cite{gander2015optimized}. 
Conversely, the standard Robin condition exhibits a markedly different behavior compared to the scenario without TCR in \cite{gander2015optimized}, where it requires an increasing number of iterations. 
Particularly, for very high heterogeneity contrasts, the advantages of the scaled Robin condition over the standard Robin diminish significantly.

\begin{table}[htbp]
    \caption{Number of iterations required by the OSM algorithm \eqref{2.4}-\eqref{2.7} to reach a tolerance of $10^{-6}$ for different heterogeneity ratios. Parameter setting: $\kappa_2=
R_c=0.01$, $h=\frac{1}{512}$.}\label{table_convergence}
     \begin{center}
      \vspace{5pt} 
    \renewcommand\arraystretch{1.2}
\begin{tabular}{|c|c|c|c|c|c|}
\hline
\multirow{2}{*}{Trans. Cond.}&\multicolumn{5}{c|}{ $\lambda$} \\
\cline{2-6} 
 & $2$ &$2\cdot10^1$&$2\cdot10^2$&$2\cdot10^3$ & $2\cdot10^4$\\
\hline
standard Robin & $69$ &$28$  &$21$ &$9$&$4$  \\
\hline
scaled Robin& $67$ &$19$  &$6$&$3$&$2$\\
 \hline
\end{tabular}
\end{center}
\end{table}

\subsubsection{Effect of thermal conductivity.}
Remark \ref{remark4.8} indicates that large thermal conductivity also enhances the convergence speed of the algorithm \eqref{2.4}-\eqref{2.7}, even when the heterogeneity contrast is fixed.
To demonstrate this result, we fix $\lambda=200$ while varying the thermal conductivity $\kappa_i,i=1,2$ and collect in Table \ref{table_kappa} the number of iterations required by the algorithm \eqref{2.4}-\eqref{2.7} to reach a tolerance of $10^{-6}$.
The results in Table \ref{table_kappa} clearly show, for both Robin conditions, that the larger the thermal conductivity $\kappa_i,i=1,2$ are, the faster the OSMs converge. 
Especially, for very large thermal conductivity, such as $\kappa_1=2000, \kappa_2=10$, the OSMs for both Robin conditions converge in only two iterations, which is optimal, as shown in \cite{gander2006optimized}.

 \begin{table}[htbp]
\caption{Number of iterations required by the algorithm \eqref{2.4}-\eqref{2.7} to reach a tolerance of $10^{-6}$ for $\lambda=200$, $R_c=0.01$ and $h=\frac{1}{512}$.}\label{table_kappa}
     \begin{center}
      \vspace{5pt} 
    \renewcommand\arraystretch{1.2}
\begin{tabular}{|c|c|c|c|c|}
\hline
\multirow{2}{*}{Trans.  Cond.}&\multicolumn{4}{c|}{$(\kappa_1,\kappa_2)$} \\
\cline{2-5} 
 & $(2,0.01)$  & $(2,0.01)\cdot10^1$&$(2,0.01)\cdot10^2$&$(2,0.01)\cdot10^3$\\
\hline
standard Robin & $21$ &$8$  &$4$&$2$  \\
\hline
scaled Robin& $6$ &$5$  &$4$&$2$\\
 \hline
\end{tabular}
\end{center}
\end{table}

\subsection{Effect of the magnitude of TCR.}
In subsection \ref{subsection5.1} we have numerically investigated the impact of the presence of TCR on the performance of the OSMs. 
Here, we further examine how the magnitude of TCR influences the convergence behavior of the algorithm. 
Remark \ref{scaled_rc} indicates that larger TCR $R_c$ leads to smaller convergence factors. 
To delineate this finding, we record the number of iterations required by the algorithm \eqref{2.4}-\eqref{2.7} for different $R_c$ in Table \ref{table_rc}. 
The results are very clear: the larger the TCR $R_c$ is, the faster the method converges, regardless of the type of Robin transmission condition employed. 
The advantage of the scaled Robin algorithm is more notable for small $R_c$ and diminishes as $R_c$ increases.
Furthermore, a large TCR also facilitates optimal convergence behavior, allowing the algorithm to converge in two iterations.

\begin{table}[htbp]
\caption{Number of iterations required by the algorithm \eqref{2.4}-\eqref{2.7} to reach a tolerance of $10^{-6}$ for $\kappa_1=2,\kappa_2=0.01$ and $h=\frac{1}{512}$.}\label{table_rc}
     \begin{center}
      \vspace{5pt} 
    \renewcommand\arraystretch{1.2}
\begin{tabular}{|c|c|c|c|c|c|c|c|}
\hline
\multirow{2}{*}{Trans.  Cond.}&\multicolumn{7}{c|}{$R_c$} \\
\cline{2-8} 
 &$10^{-3}$ &$10^{-2}$ &$10^{-1}$ &$10^{0}$ &$10^{1}$&$10^{2}$&$10^{3}$\\
\hline
standard Robin & $31$ &$21$  &$10$&$5$ &$3$ &$3$ &$2$\\
\hline
scaled Robin& $6$ &$6$  &$5$&$4$&$3$ &$3$ &$2$\\
 \hline
\end{tabular}
\end{center}
\end{table}
\subsection{Influence of physical parameters on the scaled Robin parameter prediction.}
In this subsection, we investigate how well the theoretical results predict the optimal parameters to be used in the numerical setting, and find out better choices when the asymptotic prediction is affected by physical parameters. 

We vary the Robin parameter $p$ for different $R_c$ and record the number of iterations required by the algorithm \eqref{2.4}-\eqref{2.7}, then plot these results in a log-log way in Fig. \ref{fig4.1;}, compared with the  
solution $p^*$ to the min-max problem in equation \eqref{p_01}, as well as the asymptotically optimized parameters outlined in equation \eqref{p_0}.
Fig. \ref{fig4.1;} shows that, for scaled Robin condition, the optimized parameters in equation \eqref{p_01} predict very well the optimal transmission parameters. In contrast, the parameters derived from asymptotic analysis yield satisfactory results only when $R_c$ is relatively large. As  $R_c$ approaches $0$, the accuracy of these predictions diminishes. This is likely due to the necessity of employing a fine grid to attain the asymptotic regime for very small $R_c$.
Furthermore, we observe that as $R_c$ increases, the algorithm becomes more robust: for a larger parameter interval, the number of iterations remains almost as a constant and does not vary significantly.
\begin{figure}[htbp]
    \centering
\subfigure{\includegraphics[width=6cm]{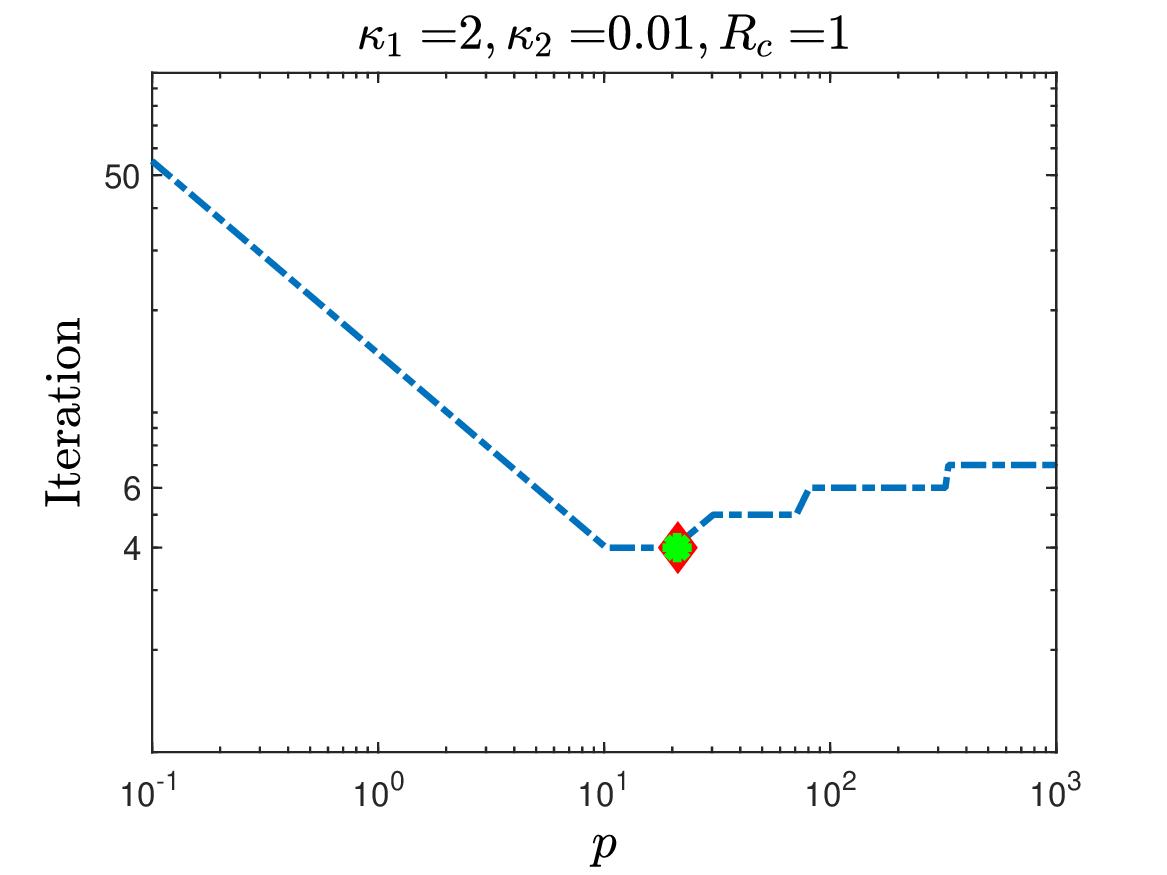}}
  \quad
\subfigure{\includegraphics[width=6cm]{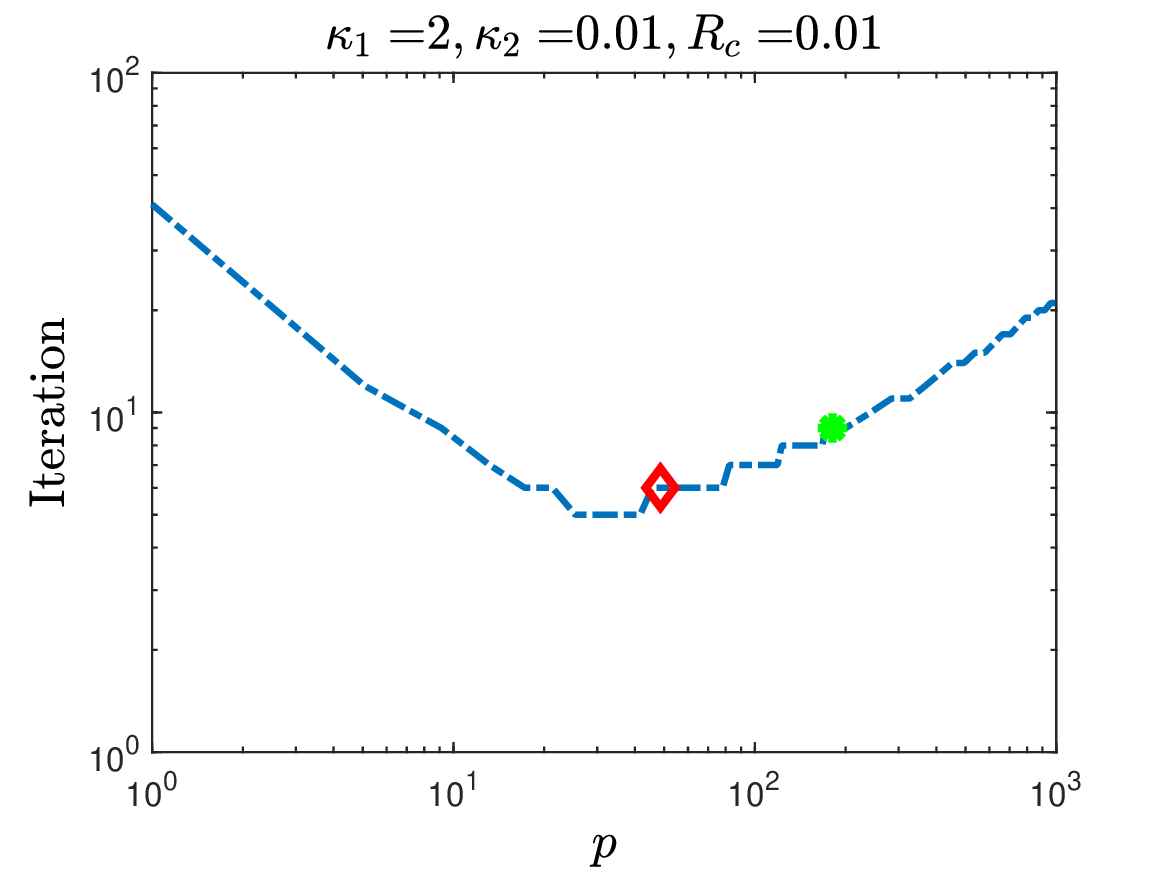}}
  \quad
\subfigure{\includegraphics[width=6cm]{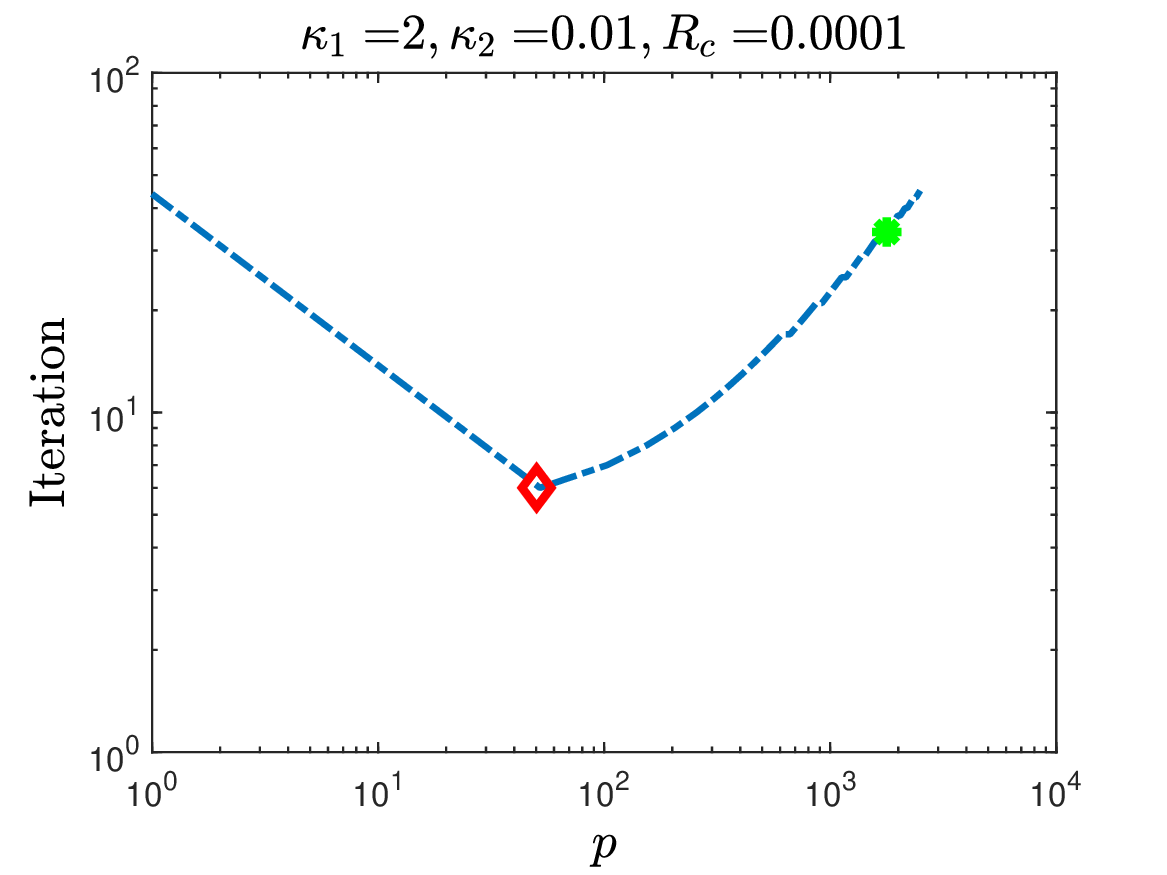}}
\caption{Number of iterations required by algorithm \eqref{2.4}-\eqref{2.7} using meshsize $h=\frac{1}{256}$ as a function of transmission parameter for different $R_c$, compared with the optimized parameters indicated 
by ''\textcolor{green}{$\ast$}'', the asymptotic prediction \eqref{p_0} and ''\textcolor{red}{$\diamond$}'', 
the closed form prediction \eqref{p_01}.}\label{fig4.1;}
    \end{figure}
To study when it reaches the asymptotic regime, we minimize the convergence factor using the Matlab command {\textbf{fminsearch}} for varying meshsize $h$ and record the numerically optimized transmission parameter $p^\ast$. The results are plotted in Fig. \ref{fig5.3}, where the left plot shows the numerically optimized parameter $p^\ast$ as a function of different meshsizes for different $R_c$, while the right plot for different $\kappa_i$. 
From the left plot, one concludes that a small $R_c$ necessitates a correspondingly small meshsize to achieve the asymptotic regime, resulting in a mesh-independent optimized transmission parameter. A similar trend is observed for the thermal conductivities $\kappa_i,i=1,2$, as shown in the right plot.
This behavior can be attributed to the interface condition given in equation \eqref{2.2}, specifically $T_{1}-T_{2}=R_{c}\bm{n_{2}}\cdot(\kappa_{2}\nabla T_{2})$, where the TCR $R_c$ has the same position as the thermal conductivity $\kappa_i$. 
As indicated by \eqref{p_0}, both $R_c$ and $\kappa_i$ manifest simultaneously as $R_c\kappa_i$ in the optimized parameters. We hypothesize that one may employ the mesh-independent prediction from equation \eqref{p_0} when $h<O(R_c\kappa)$ and resort to the mesh-dependent prediction from equation \eqref{p_rc0} when $h>O(R_c\kappa)$, where $\frac{1}{\kappa}=\frac{1}{\kappa_1}+\frac{1}{\kappa_2}$.  
    \begin{figure}[htbp]
    \centering
\subfigure{\includegraphics[width=0.45\textwidth]{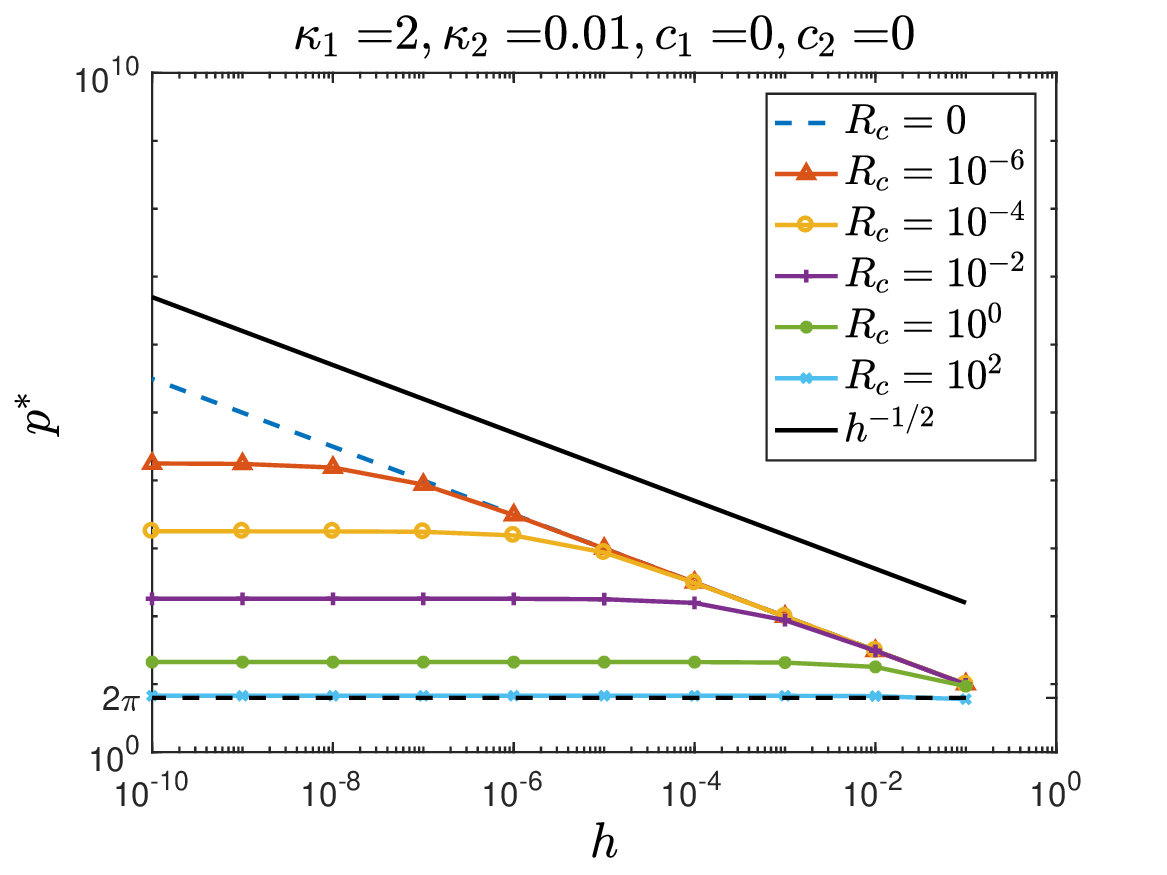}}
\hspace{-0.5cm}\subfigure{\includegraphics[width=0.45\textwidth]{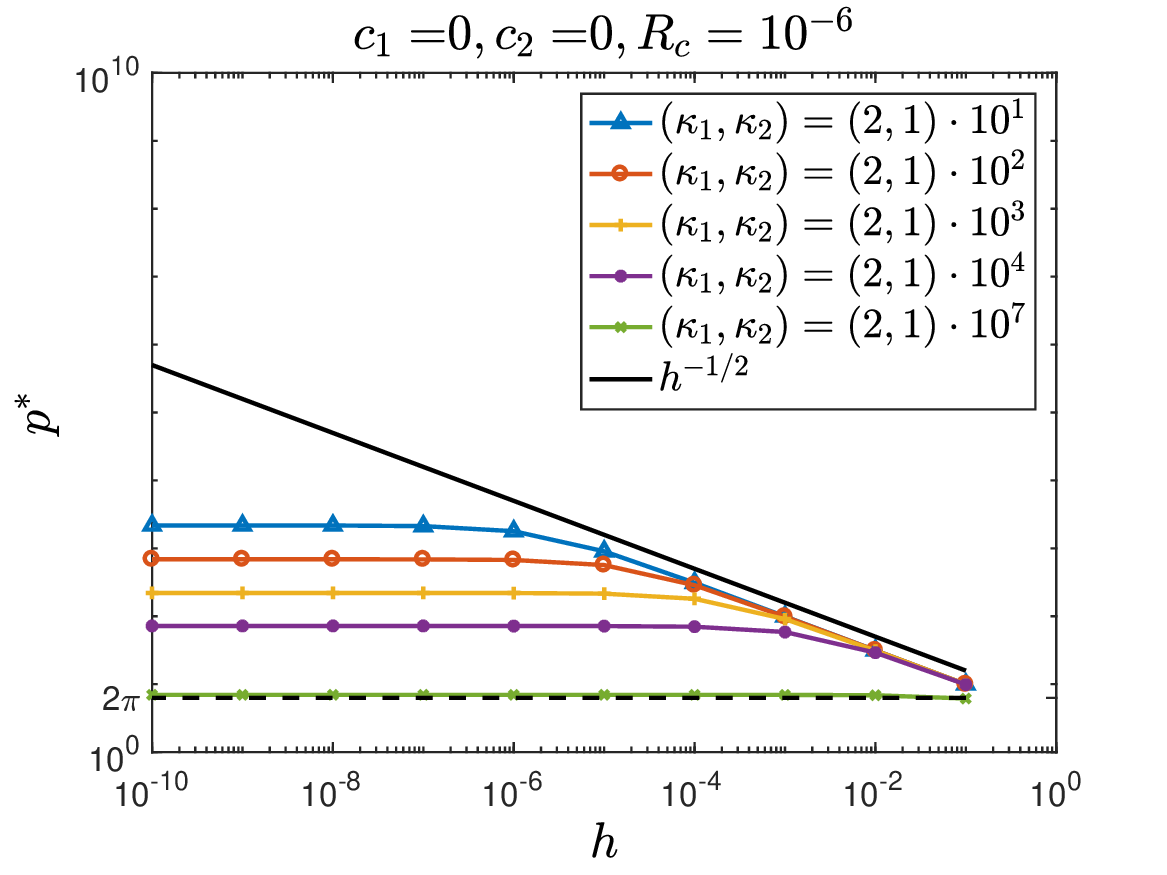}}
       \caption{The numerically  optimized transmission parameter $p^*$ as a function of the meshsize $h$. Left: when the TCR $R_c$ varies; Right: when the thermal conductivities  $\kappa_{1,2}$ vary.
    }\label{fig5.3}
    \end{figure}
\subsection{Application to cylindrical thermal rectifier.}
A classical thermal rectifier consists of two materials with different temperature-dependent thermal conductivities. Between the two materials, there is a small gap that changes due to the thermal expansion and contraction of the materials as the temperature varies. This change in the gap size accounts for the variation in thermal resistance.
When heat flows in the forward direction, the equivalent thermal resistance is low, allowing the heat to flow rapidly. Conversely, in the reverse direction, the equivalent thermal resistance is high, impeding the propagation of heat.

We consider a cylindrical thermal rectifier composed of an internal copper layer and an external alumina layer \cite{zhao2022thermal}. 
The thermal conductivities of both materials are temperature-dependent, given by $\kappa_1(T)=(413.4-0.0516 T) W/mK$ for the copper and $\kappa_2(T)=(5.5+34.5e^{-0.0033(T-237)}) W/mK$ for the  alumina.
The forward and reverse cases of the cylindrical thermal rectifier are shown in Fig. \ref{fig5.5}. 
For the forward case, the hot side is the inner boundary $r_3 = 0.05 m$ of the
structure, while for the reverse case, the hot side is located at the outer boundary $r_1 = 0.15 m$, with the interface set at $r_2 = 0.1 m$. 
The hot and cold sides are maintained at $700 K$ and $300 K$ respectively.
According to the data provided in \cite{zhao2022thermal}, for an initial interface gap of $0.2 mm$, the TCR $R_c=9.280\cdot10^{-5}$ for the forward case  and $R_c=0.01552$ for the reverse case. 
The problem is solved using algorithm \eqref{2.4}-\eqref{2.7} with the scaled Robin transmission condition, where the optimized parameter $p^\ast$ in equation \eqref{p_01} is locally applied according to the local temperature on the interface. The nonlinear sub-problems are discretized using the finite element method with a maximum element size $h_{max}=0.01m$ and solved using a Picard linearization. 
To start the subdomain iteration, random initial guesses of $h_{1,2}^0$ and the temperature $T_i$ on the interface $\Gamma$ are applied. 
The iteration terminates when the difference between two successive steps is reduced to $10^{-6}$ 
\begin{equation}\nonumber
\max\{\|T_1^n-T_1^{n-1}\|_{\infty},\|T_2^n-T_2^{n-1}\|_{\infty}\}<10^{-6},
\end{equation}
where $T_i^n$ are the numerical solutions at $n$-th iteration.
\begin{figure}[htbp]
    \centering
    \begin{minipage}[c]{0.4\textwidth} 
    \centering
    \includegraphics[width=1.1\textwidth]{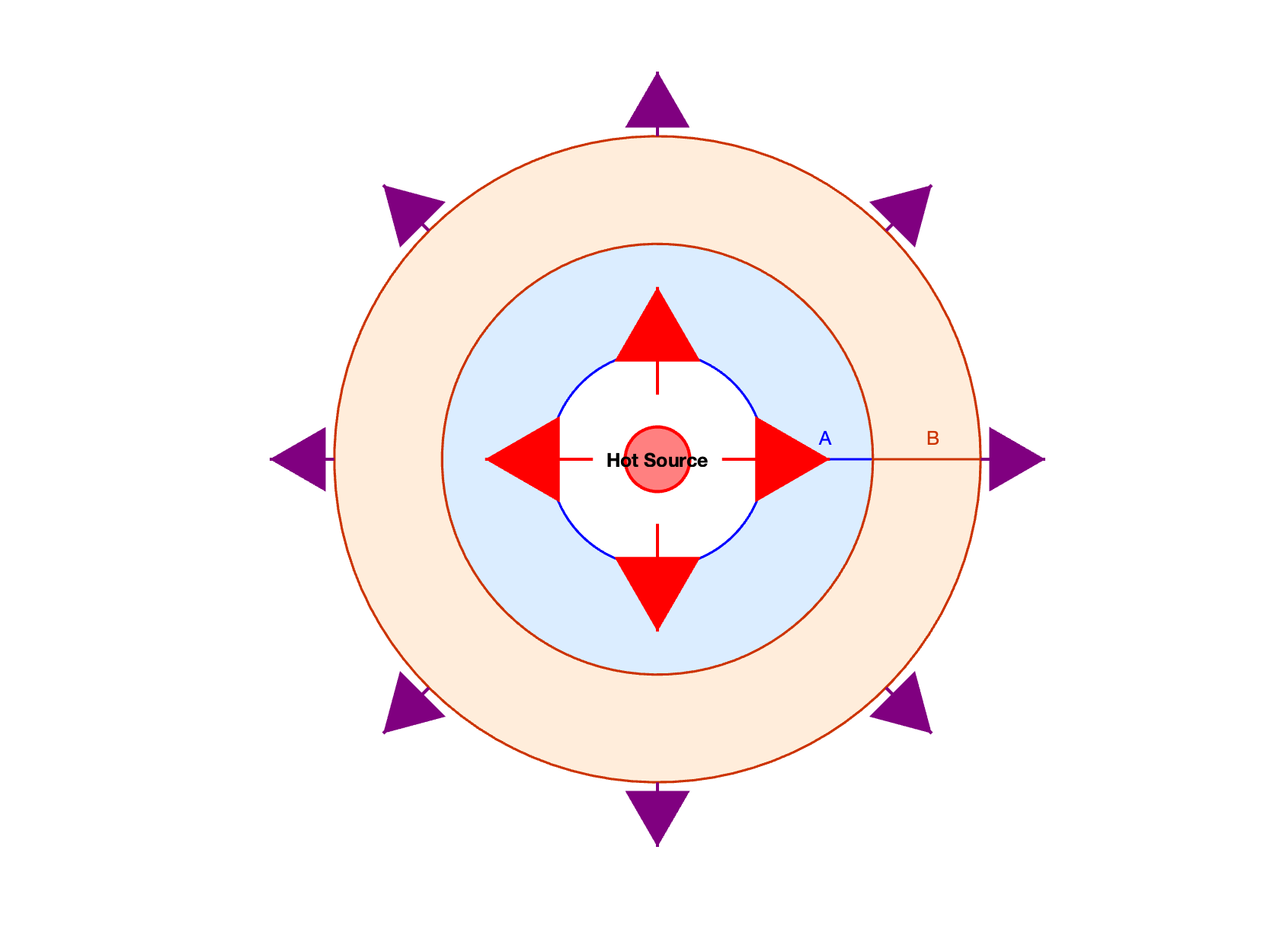} 
    \centerline{(a) Forward case}
  \end{minipage}%
 \quad
  \begin{minipage}[c]{0.4\textwidth}
    \centering
    \includegraphics[width=1.1\textwidth]{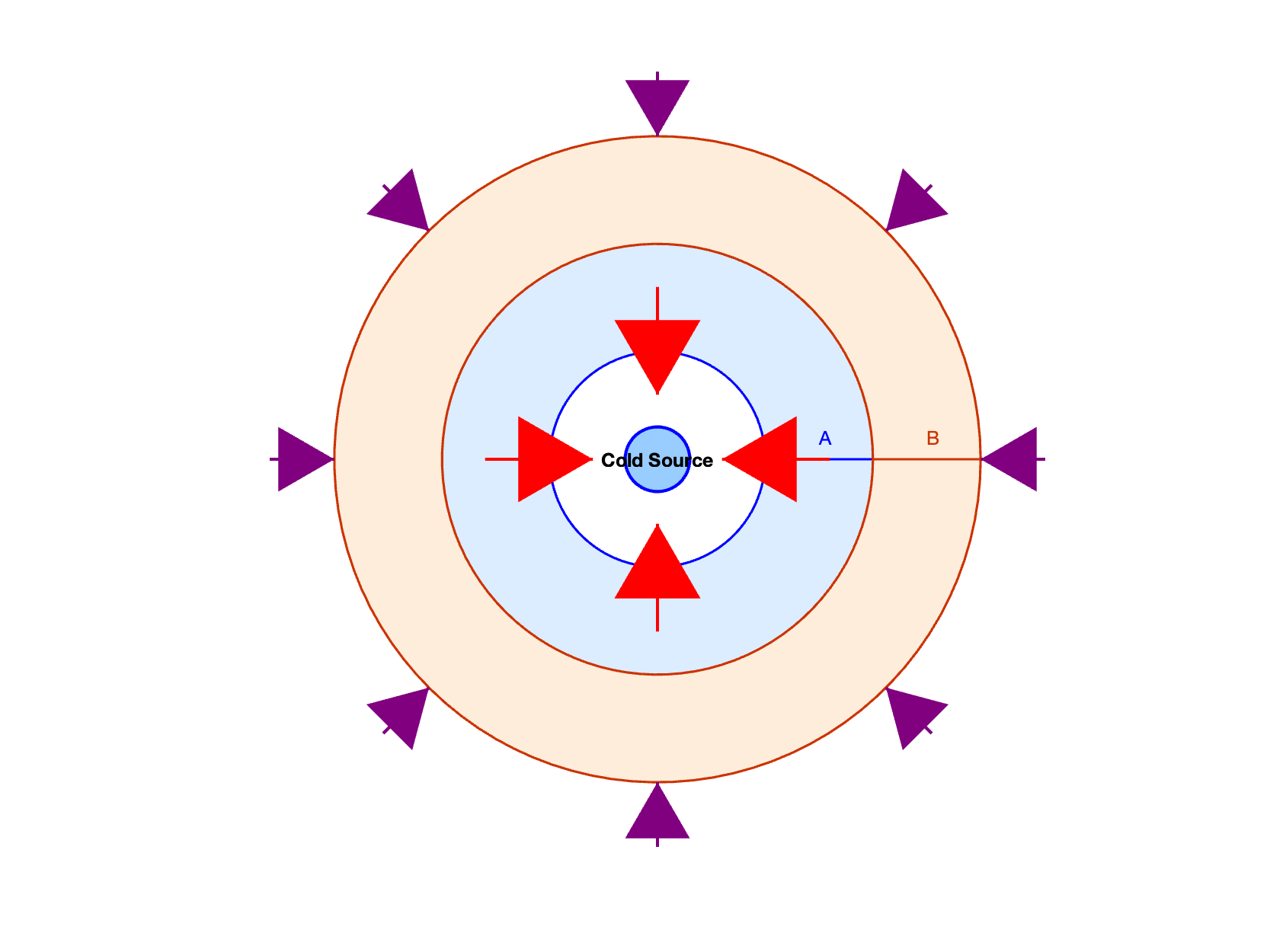}
    \centerline{(b) Reverse case}
     \end{minipage}
       \caption{Cross-section of the composite cylindrical thermal rectifier.
    }\label{fig5.5}
    \end{figure}   
The algorithm requires  $9$ subdomain iterations for the forward case and $5$ iterations for the reverse case. 
The resulting temperature field is illustrated in Fig. \ref{fig5.6}, which clearly demonstrates the function of a thermal rectifier. 
To sum up, this experiment shows that our method is efficient in solving engineering problems, even when the nonlinearity and non-straight interface occur.
    \begin{figure}[htbp]
    \centering
    \begin{minipage}[c]{0.4\textwidth} 
    \centering
    \includegraphics[width=1\textwidth]{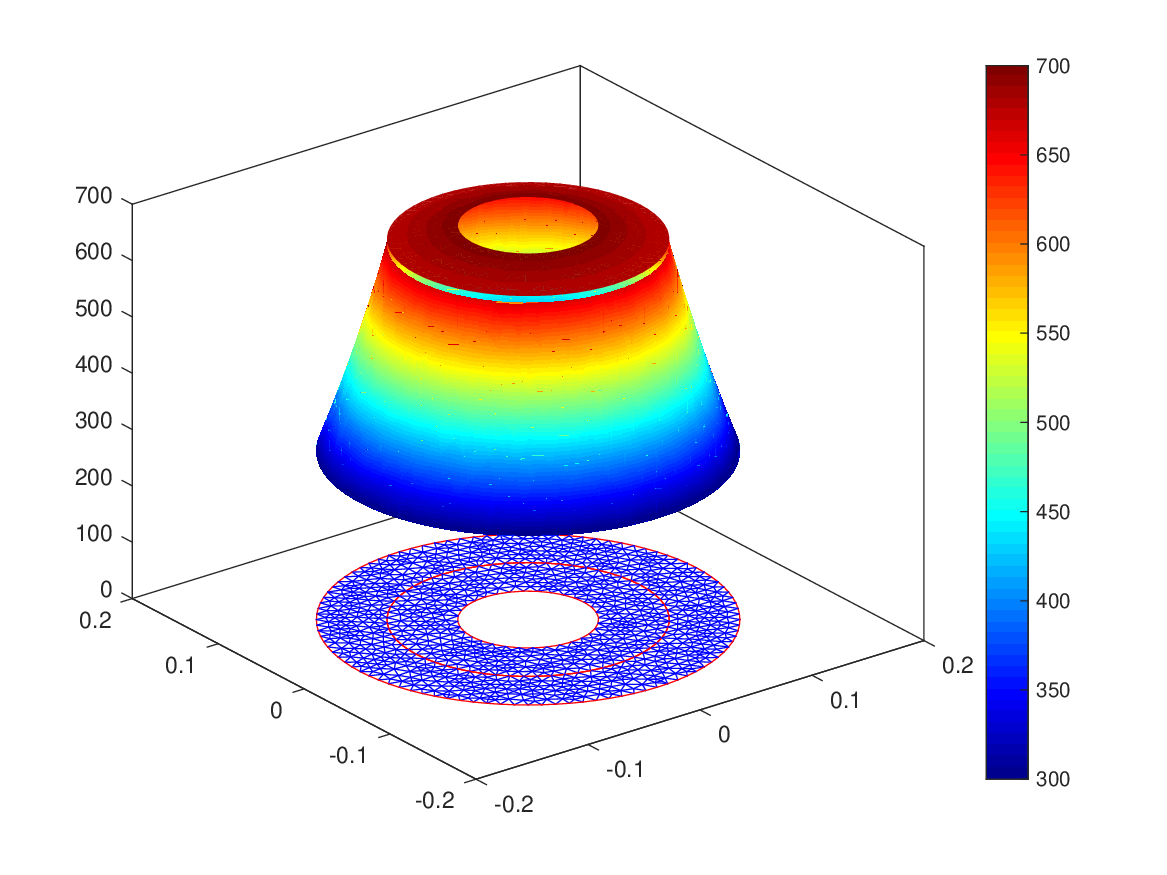} 
    \centerline{(a) Forward case}
  \end{minipage}%
 \quad
  \begin{minipage}[c]{0.4\textwidth}
    \centering
    \includegraphics[width=1\textwidth]{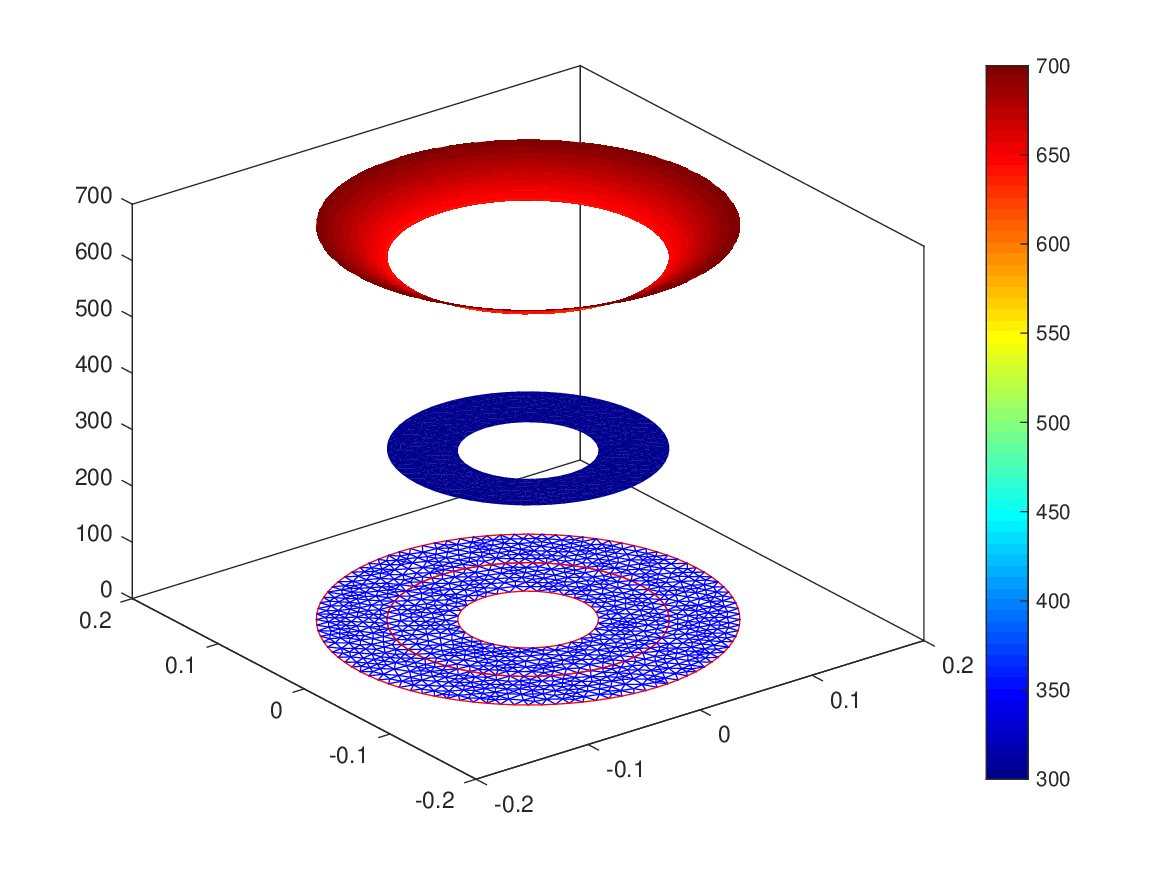}
    \centerline{(b) Reverse case}
     \end{minipage}
       \caption{Temperature field of the thermal rectifier. 
    }\label{fig5.6}
    \end{figure}

 \section{Conclusion}
The OSMs are inherently efficient domain decomposition algorithms for modeling heat conduction in composite materials. 
This study investigates the standard Robin and scaled Robin transmission conditions within the OSM framework, with a particular focus on the role of the TCR, which more accurately represents the physical behavior of composite systems. 
Though the standard Robin condition is quite natural, it is not optimal in terms of both theoretical analysis and numerical performance. In contrast, we concentrate on the OSM with the scaled Robin transmission condition, where the inclusion of TCR leads to significant improvements in algorithm performance. 
Specifically, when TCR is incorporated, the convergence behavior becomes asymptotically mesh-independent, a marked difference from the mesh-dependent convergence observed when TCR is ignored. Furthermore, factors such as large heterogeneity contrast, high thermal conductivity, and a high TCR magnitude all notably accelerate the convergence rate of the scaled Robin OSM algorithm, exhibiting behaviors that substantially differ  from the case without TCR.
Furthermore, this study demonstrates that parameters derived from the equi-oscillation principle yield accurate predictions for the optimal transmission parameters to be applied in the OSM algorithms. However, when the TCR values are exceedingly small, the parameters obtained through asymptotic analysis necessitate the use of excessively fine meshes. This challenge can be addressed by integrating the asymptotic results from the TCR-free case, thereby optimizing the mesh requirements for accurate simulations. The numerical experiments not only support the theoretical findings, but also illustrate the applicability to the nonlinear engineering problems with non-straight interface.

\section*{Acknowledgments}
This work is supported in part by
the Jilin Province Science and Technology Development Planning under grant YDZJ202201ZYTS573 and the Jilin Province Department of Education under grant JJKH20250297BS.

 \bibliographystyle{plain}
\bibliography{paper}

\end{document}